\documentclass[smallextended,envcountsect]{svjour3}
\usepackage{amsmath}
\usepackage{amsfonts}
\usepackage{mathptmx}
\usepackage{amssymb, graphicx}%
\usepackage{color,cite}
\usepackage{hyperref}
\hypersetup{
    colorlinks=true,
    linkcolor=blue, % Couleur des liens internes
    citecolor=red, % Couleur des numŽros de la biblio dans le corps
    urlcolor=blue  } % Couleur des url
\usepackage[colorinlistoftodos]{todonotes}
\def\text#1{\mbox{#1}}

%%%%%%%%%%%%%%%%%%%%%%%%%%%%%%%%%%%%%%%%%%%%%%%%%%%%%%%%%%%%%%%%%%%%%%%%%
% Macros
%%%%%%%%%%%%%%%%%%%%%%%%%%%%%%%%%%%%%%%%%%%%%%%%%%%%%%%%%%%%%%%%%%%%%%%%%

%%%%BVB 08:08/16

\newcommand{\la}{\lambda}
\newcommand{\de}{\delta}
\newcommand{\eps}{\varepsilon}
\newcommand{\bx}{\bar x}

\newcommand {\R} {\mathbb R}

\newcommand {\B} {\mathbb B}

%distance
%Graph
\newcommand {\dom} {{\rm dom}\,}

\newcommand {\bd} {{\rm bd}\,}

\newcommand {\Er} {{\rm Er}\,}

\newcommand {\sd} {\partial}
\newcommand {\Int} {{\rm int}\,}

% geht nur mit amssymb.sty
%iff

\newcommand{\gfrac}[2]{\genfrac{}{}{0pt}{}{#1}{#2}}

\def\RHS{right-hand side}
\def\SVM{set-valued mapping}

%%%%672 30:1114

%%%%Me134 37:0716

%%%%Me142 17:0710

%%%%Me711915
\newcommand{\ang}[1]{\left\langle #1 \right\rangle}

\newcounter{mycount}

%%%%467:0515
%%%%BB:9864
%%%%BB58:0819

%%%%BW78:0520
\newcommand\xqed{%
  \leavevmode\unskip\penalty9999 \hbox{}\nobreak\hfill
  \quad\hbox{$\triangle$}}
%\newcommand{\be}{\begin{equation}}
%\newcommand{\ee}{\end{equation}}
%%%%392:0515

%\renewcommand{\thesection}{\arabic{section}.}
\makeatletter
\makeatother

\smartqed

\def\Ptb{{\rm Ptb\,}}

\title{Perturbation of error bounds
\thanks{The research is supported by the Australian Research Council: project DP160100854;
EDF and the Jacques Hadamard Mathematical Foundation: Gaspard Monge Program for Optimization and Operations Research.
The research of the second and third authors is also supported by MINECO of Spain
and FEDER of EU: grant MTM2014-59179-C2-1-P. }}
%\thanks{The research was supported by the Australian Research Council, project DP160100854.
%Research of the second and third authors is also supported by MINECO of Spain
%and FEDER of EU, grant MTM2014-59179-C2-1-P.}}
\dedication{Dedicated to Professor Terry Rockafellar in honor of his 80th birthday}

\author{A. Y. Kruger
\and M. A. L\'{o}pez
\and M. A. Th\'{e}ra}

\institute{A. Y.~Kruger \at
Centre for Informatics and Applied Optimization, Faculty of Science and Technology, Federation University Australia\\
\email{a.kruger@federation.edu.au}
\and
M. A. L\'{o}pez \at
Department of Statistics and Operations Research, University of Alicante, Spain and Federation University Australia\\
\email{marco.antonio@ua.es}
\and
M. A. Th\'{e}ra \at
Laboratoire XLIM, UMR-CNRS 6172, University of Limoges, France and Federation University Australia\\
\email{michel.thera@unilim.fr}
\date{Received: date / Accepted: date}
% The correct dates will be entered by the editor
}
\begin{document}

\maketitle

\begin{abstract}
%Convex constraint systems play a remarkable role in variational analysis, and for  deriving numerical  methods in convex minimization.
Our aim in the current article is to extend the developments in Kruger, Ngai \& Th\'era, SIAM J. Optim. {\bf 20}(6), 3280--3296 (2010) and, more precisely,  to characterize, in the Banach space setting, the stability of the local and global error bound property of inequalities determined by lower semicontinuous functions under data perturbations.
We propose new concepts of (arbitrary, convex and linear) perturbations of the given function defining the system under consideration, which turn out to be a useful tool in our analysis.
The characterizations of error bounds for families of perturbations can be interpreted as estimates of the `radius of error bounds'.
The definitions and characterizations are illustrated by examples.

\keywords{Error bound \and Feasibility problem \and Perturbation \and Subdifferential \and Metric regularity \and Metric subregularity}
% \PACS{PACS code1 \and PACS code2 \and more}
\subclass{49J52 \and 49J53 \and 90C30}
\end{abstract}

\section{Introduction and Preliminaries}
In this article, we mostly follow the standard terminology and notation used by the optimization and variational analysis community, see,  e.g., \cite{RocWet98,DonRoc14}.
Throughout, $X$ and $X^*$ stand for a real Banach space and its topological dual, respectively, while $\B$ and $\B^*$ denote the corresponding unit balls.
The norms in both spaces are denoted by the same symbol $\Vert\cdot\Vert$.
For a subset $S$, we denote its interior and boundary by $\Int S$ and $\bd S$, respectively.
The distance from a point $x$ to a set $S$ is denoted by $d(x,S):=\inf_{u\in S}\Vert u-x\Vert$, and we use the con\-vention $d(x,S)=+\infty$  whenever   $S=\emptyset$.

For an extended-real-valued  function: $f : X\rightarrow\R_\infty:=\R\cup\{+\infty\}$, its domain is the set
$\dom f:=\{x\in X\mid f(x) < +\infty\}$.
A function $f$ is said to be \emph{proper} if
%it is not identically $-\infty$ and
$\dom f\ne\emptyset$.
We use the symbol $f_+(x)$ to denote $\max(f(x),0)$.
The class of all extended-real-va\-lued proper convex lower semicontinuous functions on $X$ is denoted by $\Gamma_0(X)$.
For a convex function $f:X\to {\R_\infty}$, its (Moreau) \emph{subdifferential} at $x\in\dom f$ is given by
$$\partial  f(x):=\{x^*\in X^*\mid  \langle x^{\star},u-x\rangle\le f(u)-f(x),\; \forall u\in X\}.$$
It is a (possibly empty) weak$^*$-closed set in $X^*$.

The article is  concerned with the study of the solution set of a single inequality of the type
\begin{equation}\label{2}
S_f:=\{x\in X\mid f(x)\le 0\},
\end{equation}
where $f:X\rightarrow\R_\infty$.
Such sets subsume, e.g., feasible sets in mathematical programming.
Indeed, solutions of a finite family of inequalities:
\begin{equation*}\label{l2bis}
\{x\in X\mid f_i(x)\leq 0\quad \text{ for all} \quad i=1,\ldots, n\}
\end{equation*}
can be re\-written as (\ref{2}) with function $f$ defined by
$f(x)=\max\{f_1(x),\ldots,f_n(x)\}$.

An important issue when studying  systems  of the type (\ref{2}) is to give an upper estimate (error bound) of the distance from a point $x\in X$  to the set $S_f$ in terms of a  computable function measuring the violation of the inequality in \eqref{2}.
This can, e.g., be the function $f$ itself.

We say that a function $f$ has (or admits) a {\it local error bound} at a point $\bar x \in S_f$ if there exists a real $\tau>0$ such that
\begin{equation}\label{4}
\tau d(x,S_f) \leq f_+(x)
\end{equation}
for all $x$ near $\bar x$.
We similarly say that a function $f$ has a \textit{global error bound} if there exists a real $\tau>0$ such that inequality (\ref{4}) is satisfied for all $x\in X$.

The exact upper
bound of all such $\tau$ (the \textit{error bound modulus}; cf. \cite{FabHenKruOut10}) equals either \begin{equation}\label{ErL}
\Er{f}(\bar{x}) :=\liminf_{\gfrac{x\to\bar{x}}{f(x)>0}}
\frac{f(x)}{d(x,S_f)}
\end{equation}
in the local setting, or
\begin{equation}\label{ErG}
\Er{f}:=\inf_{f(x)>0} \frac{f(x)}{d(x,S_f)}
\end{equation}
in the global case.
Constants (\ref{ErL}) and (\ref{ErG}) provide
quantitative estimates of the error bound property.

The starting point of the theory  of error bounds goes back to the pio\-neering work  by Hoffman \cite{Hof52} (although some traces of the error bound property can be found in an earlier publication by Rosenbloom \cite{Ros51}\footnote{Private communication by J.-B. Hiriart-Urruty})
who established for a linear function in finite di\-mensions the following result:
\vskip 2mm
\textit{ Given an $m\times n$  matrix $A$ and a vector $b\in \R^m$, there exists a positive number $\kappa > 0 $ such that the distance from $x$ to the set $S:=\{x\in\R^n\mid  Ax\leq b\} $  has an upper bound given  by $\kappa \Vert (Ax-b)_+\Vert$, where  for  $y:=(y_1,\ldots,y_m)\in \R^m$,  $y_+$ denotes the vector  $(\max(y_1,0),\ldots,\max(y_m,0))$.}
\vskip 2mm

%In other words, the distance from a point $x\in \R^n$  to the solution set of a system of linear inequalities (the error) is bounded above by a constant times the norm of the residual.
%By a residual we mean a function $r:\R^m\to\R_+$  such that $r(x) = 0$ if and only if $x$ belongs to the solution set.

After the work by Hoffmann and its extensions by Robinson \cite{Rob75}, Man\-gasarian \cite{Man85}, Auslender \& Crouzeix \cite{AusCro88}, Pang \cite{Pang97}, Lewis and Pang \cite{LewPan98}, Klatte \& Li \cite{KlaLi99}, Jourani \cite{Jou00}, there have been significant developments of various aspects of errors bounds for con\-vex and nonconvex functions in recent years.
The interested reader is referred to the articles by Ng \& Zheng \cite{NgZhe01}, Az{\'e} \cite{Aze03,Aze06}, Az{\'e} \& Corvellec \cite{AzeCor04}, Z{\u{a}}linescu \cite{Zal03}, Huang \& Ng \cite{HuaNg04}, Corvellec \& Motreanu \cite{CorMot08}, Fabian et al \cite{FabHenKruOut10,FabHenKruOut12}, Gfrerer \cite{Gfr11}, Ioffe \cite{Iof00_,Iof16}, Ioffe \& Outrata \cite{IofOut08}, Ngai \& Th{\'e}ra \cite{NgaThe04,NgaThe05,NgaThe08,NgaThe09}, Zheng \& Ng \cite{ZheNg10,ZheNg12}, Bednarczuk \& Kruger \cite{BedKru12,BedKru12.2}, Meng \& Yang \cite{MenYan12}, Kruger \cite{Kru15,Kru15.2,Kru15.3} and the references therein.

Many authors have recently studied
error bounds   in connection with the \emph{metric regularity} and \emph{subregularity} (cf. \cite{DonRoc14}) as well as \emph{Aubin property} and \emph{calmness} of set-valued mappings: \cite{Aze06,CanKruLopParThe14,CanHenLopPar16,Gfr11,Iof00_,Iof16, IofOut08, NgaThe04,NgaThe08,Kru15,Kru15.2,Kru15.3,ZheNg10,ZheNg12, NgaTroThe13,NgaTroThe14}.
The connections between the error bounds and weak sharp minima were studied in \cite{BurDeng05}.

Another typical example where the theory of error bounds plays an important role  is the so-called \textit{feasibility problem} \cite{BauBor96, BecTeb03, HesLuk13, KruLukTha}, which consists in finding a point in the intersection of a finite family  of  closed (usually convex) sets and has a broad applicability in various areas such as, e.g., image reconstruction \cite{Com96}.
Several iterative methods such as the method of successive orthogonal projections, the cyclic subgradient projections method, etc,  are known to solve this problem (see \cite{Cen84}).
Error bounds are also used in the convergence analysis of projection algorithms.
They allow one to quantify the proximity of an iterate to the solution set of the problem.
The reader is referred to the recent survey paper by Ioffe \cite{Iof16,Iof16.2} and the references therein and also to some recent contributions  by Beck \& Teboulle \cite{BecTeb03}, Lewis et al \cite{LewLukMal09}, Hesse \& Luke \cite{HesLuk13}, Borwein et al \cite{BorLiYao14}, Drusvyatskiy et al \cite{DruIofLew15.2}, Kruger \& Thao \cite{KruTha16}, Noll \& Rondepierre \cite{NolRon16}, and Bolte et al \cite{BolNguPeySut}.

In this article, we
%mainly
study stability of local and global error bounds under perturbations.
%More precisely,
It was  observed in Ngai et al \cite[Theorem~1]{NgaKruThe10} and Kruger et al \cite[Theorem~1]{KruNgaThe10}, that the requirement for the distance from the origin to the subdifferential of a function at a reference point to be strictly positive, while being a conventional sufficient condition for a local error bound for the system (\ref{2}), is far from being necessary.
This con\-dition guaranties the local error bound property not just for the given function, but also for a family of its small per\-turbations.
{In the setting of \SVM s and the metric subregularity property, similar observations have been made recently by Gfrerer \cite[Theorem~3.2]{Gfr11} and Li \& Mordukhovch \cite[Theorem~4.4]{LiMor12} in terms of the distance from the origin to the \emph{critical limit set} \cite[Definition~3.1]{Gfr11} and the kernel of the \emph{reversed mixed coderivative} \cite{Mor06.1}, respectively.
We also mention the most recent development in Gfrerer \& Outrata \cite[Theorem~2.6]{GfrOut16} where \cite[Theorem~3.2]{Gfr11} has been upgraded (in finite dimensions) to an almost radius-type result.}

In Section~\ref{On}, we exploit the concept of local $\varepsilon$-perturbation from \cite{KruNgaThe10}, define subfamilies of convex and linear $\varepsilon$-perturbations, and establish in Theorem~\ref{ge} the exact formula for the `radius of  local error bounds' for families of  arbitrary, convex and linear perturbations in terms of the distance from the origin to the subdifferential.
Theorem~\ref{ge} in a sense continues the series of \emph{radius theorems} from Dontchev--Ro\-ckafellar \cite{DonRoc14}, ``furnishing
a bound on how far perturbations of some sort in the specification of a problem can
go before some key property is lost'' \cite[page~364]{DonRoc14}.
Note that, unlike the `stable' properties of \emph{metric regularity}, \emph{strong metric regularity} and \emph{strong metric subregularity} studied in \cite{DonRoc14}, the error bound property (as well as the equivalent to it metric subregularity property, cf. \cite[page~200]{DonRoc14}) can be lost under infinitely small perturbations.
That is why stronger conditions are required to ensure stability.
Unlike, e.g., \cite[Theorem~6A.9]{DonRoc14} which gives the radius of strong metric subregularity in terms of the \emph{subregularity modulus} being an analogue of (the reciprocal of) the error bound modulus $\Er{f}(\bar{x})$ \eqref{ErL}, Theorem~\ref{ge} utilizes the quantity ${|\partial{f}|{}_{\rm bd}(\bar{x})}$ (see Theorem~\ref{lo} below).
In view of Theorem~\ref{lo}, this quantity can be smaller than $\Er{f}(\bar{x})$, and, thus, condition ${|\partial{f}|{}_{\rm bd}(\bar{x})}>0$ used in Corollary~\ref{re} imposes a strong requirement on the function $f$ and ${|\partial{f}|{}_{\rm bd}(\bar{x})}$ leads to a radius theorem of a different type compared to those in \cite{DonRoc14}.

The same idea applies partially when considering stability of global error bounds in Section~\ref{S3}.
In the global setting, we define families of convex and linear perturbations as well as larger families of weak convex and weak linear perturbations, discuss some limitations of these definitions, and establish in Theorem~\ref{Ge} lower and upper estimates for the `radius of global error bounds'  for families of such perturbations.
The family of convex perturbations considered here is larger than the corresponding one studied in \cite{KruNgaThe10}.
In particular, the so called \emph{asymptotic
qualification condition} is waived.

Some examples are given for the convenience of the reader to illustrate the different concepts introduced along the presentation.

\section{Stability of local error bounds}\label{On}

In this section we establish conditions for stability of local error bounds for the con\-straint  system (\ref{2}).
We start with the  following statement extracted from \cite[The\-orem~1]{KruNgaThe10}.

%\pagebreak[3]

\begin{theorem}\label{lo}
Let $f\in\Gamma_0(X)$ and $f(\bar{x})=0$.
Then function $f$ has a local error bound at $\bar x$, provided that one of the following two conditions is satisfied:
\begin{enumerate}
\item
${\overline{|\partial{f}|}{}^>(\bar{x})}:=\liminf_{x\to \bar{x},\,
f(x)>f(\bar{x})}d(0,\partial{f}(x))>0$;
\item
${|\partial{f}|{}_{\rm bd}(\bar{x})}:=d(0,\bd\partial{f}(\bar{x}))>0$.
\end{enumerate}
Moreover, condition {\rm (i)} is also necessary for $f$ to have a local error bound at $\bar x$ and
\begin{equation}\label{lo.a}
{|\partial{f}|{}_{\rm bd}(\bar{x})}\le{\overline{|\partial{f}|}{}^>(\bar{x})}=\Er{f}(\bar{x}).
\end{equation}
\end{theorem}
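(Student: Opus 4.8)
The plan is to reduce everything to the two relations in \eqref{lo.a}, since by the very definition of the error bound modulus the function $f$ has a local error bound at $\bar x$ if and only if $\Er{f}(\bar{x})>0$. Granting \eqref{lo.a}, the three assertions follow at once: condition {\rm (i)} is exactly $\overline{|\partial{f}|}{}^>(\bar{x})>0$, which by the equality in \eqref{lo.a} is equivalent to $\Er{f}(\bar{x})>0$ and hence to the local error bound, yielding both the sufficiency and the necessity of {\rm (i)}; and condition {\rm (ii)}, $|\partial{f}|{}_{\rm bd}(\bar{x})>0$, forces $\overline{|\partial{f}|}{}^>(\bar{x})\ge|\partial{f}|{}_{\rm bd}(\bar{x})>0$ by the inequality in \eqref{lo.a}, hence {\rm (i)}. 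So it remains to establish {\rm (a)} $\overline{|\partial{f}|}{}^>(\bar{x})=\Er{f}(\bar{x})$ and {\rm (b)} $|\partial{f}|{}_{\rm bd}(\bar{x})\le\overline{|\partial{f}|}{}^>(\bar{x})$.

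For the easy inequality $\Er{f}(\bar{x})\le\overline{|\partial{f}|}{}^>(\bar{x})$ in {\rm (a)}, I would fix $x$ near $\bar x$ with $f(x)>0$ and any $x^*\in\partial{f}(x)$; the subgradient inequality gives $\langle x^*,x-u\rangle\ge f(x)$ for every $u\in S_f$, whence $\|x^*\|\,d(x,S_f)\ge f(x)$ and $f(x)/d(x,S_f)\le d(0,\partial{f}(x))$, and passing to the $\liminf$ gives the claim. The reverse inequality is the heart of the proof, obtained by a descent argument. Fix $\tau<\overline{|\partial{f}|}{}^>(\bar{x})$ and $\delta>0$ with $d(0,\partial{f}(u))>\tau$ whenever $\|u-\bar x\|<\delta$ and $f(u)>0$. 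Arguing by contradiction, suppose $x_k\to\bar x$ with $f(x_k)>0$ and $d(x_k,S_f)>f(x_k)/\tau$. Applying Ekeland's variational principle to the lower semicontinuous function $f_+$ at $x_k$ (with $\inf f_+=0$, attained on $S_f\ni\bar x$) produces $z_k$ with $\|z_k-x_k\|\le f(x_k)/\tau<d(x_k,S_f)$ that minimizes $y\mapsto f_+(y)+\tau\|y-z_k\|$. Then $z_k\notin S_f$, so $f(z_k)>0$; by lower semicontinuity $f_+=f$ near $z_k$, so $\partial{f_+}(z_k)=\partial{f}(z_k)$, and the Moreau--Rockafellar sum rule at the minimizer $z_k$ gives $0\in\partial{f}(z_k)+\tau\B^*$, i.e. $d(0,\partial{f}(z_k))\le\tau$. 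Since $d(x_k,S_f)\le\|x_k-\bar x\|\to0$ and $f(x_k)<\tau\,d(x_k,S_f)$ along this sequence, one has $f(x_k)\to0$, hence $z_k\to\bar x$ and eventually $\|z_k-\bar x\|<\delta$, contradicting the choice of $\delta$. Thus $f(x)/d(x,S_f)\ge\tau$ for all $x$ close to $\bar x$ with $f(x)>0$, and letting $\tau\uparrow\overline{|\partial{f}|}{}^>(\bar{x})$ finishes {\rm (a)}.

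For {\rm (b)} I would distinguish two cases according to the position of the origin relative to $\partial{f}(\bar{x})$. If $0\notin\Int\partial{f}(\bar{x})$, then $d(0,\bd\partial{f}(\bar{x}))=d(0,\partial{f}(\bar{x}))$ (the nearest point of a closed convex set to an exterior point lies on its boundary, and the distance is $0$ when $0\in\bd\partial{f}(\bar{x})$); on the other hand, the norm-to-weak$^*$ closedness of the graph of $\partial{f}$ together with weak$^*$ compactness of bounded sets in $X^*$ shows that $d(0,\partial{f}(\cdot))$ is lower semicontinuous at $\bar x$, so $d(0,\partial{f}(\bar{x}))\le\liminf_{x\to\bar x}d(0,\partial{f}(x))\le\overline{|\partial{f}|}{}^>(\bar{x})$. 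If instead $0\in\Int\partial{f}(\bar{x})$, set $\rho:=d(0,\bd\partial{f}(\bar{x}))$, so $\rho\B^*\subseteq\partial{f}(\bar{x})$; taking the supremum of the subgradient inequalities at $\bar x$ over this ball gives $f(u)\ge\rho\|u-\bar x\|$ for all $u$, forcing $S_f=\{\bar x\}$. Then $d(x,S_f)=\|x-\bar x\|$, and the easy inequality yields $d(0,\partial{f}(x))\ge f(x)/d(x,S_f)\ge\rho$ for every $x$ near $\bar x$ with $f(x)>0$, whence $\overline{|\partial{f}|}{}^>(\bar{x})\ge\rho$. In both cases $|\partial{f}|{}_{\rm bd}(\bar{x})\le\overline{|\partial{f}|}{}^>(\bar{x})$.

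I expect the reverse inequality in {\rm (a)} to be the main obstacle: the delicate points are producing, via Ekeland's principle, an almost stationary point $z_k$ while keeping it inside the neighbourhood on which the subdifferential bound holds, and legitimately replacing $\partial{f_+}(z_k)$ by $\partial{f}(z_k)$. The first is resolved by the observation that $f(x_k)\to0$ along the extremal sequence, so that no upper semicontinuity of $f$ at $\bar x$ is needed (this matters precisely when $\bar x$ lies on the boundary of $\dom f$); the second follows from lower semicontinuity of $f$, which guarantees $f>0$ on a whole neighbourhood of $z_k$.
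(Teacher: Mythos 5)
Your proposal is correct, but a direct comparison is not quite possible: the paper states Theorem~\ref{lo} without proof, as an extract from \cite[Theorem~1]{KruNgaThe10}, remarking only that the equality in \eqref{lo.a} is well known. Your self-contained argument follows the standard route behind that reference: the reduction of all assertions to \eqref{lo.a} via the equivalence ``local error bound at $\bar x$ \iff $\Er f(\bar x)>0$'' is exactly right; the inequality $\Er f(\bar x)\le{\overline{|\partial{f}|}{}^>(\bar{x})}$ from the subgradient inequality, and the reverse inequality via Ekeland's variational principle applied to $f_+$ (with the key observations that $f(x_k)\to0$ along the extremal sequence, that $z_k\notin S_f$ because $\|z_k-x_k\|<d(x_k,S_f)$, and that $\partial f_+(z_k)=\partial f(z_k)$ since the two convex functions coincide near $z_k$), are the classical Az\'e--Corvellec/Ioffe-type argument. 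The case split on whether $0\in\Int\partial f(\bar x)$ for part (b) is also the natural one. Two small points deserve tightening but are not gaps: (1) in infinite dimensions the nearest point of the closed convex set $\partial f(\bar x)$ to $0$ need not be attained, so the identity $d(0,\bd\partial f(\bar x))=d(0,\partial f(\bar x))$ when $0\notin\Int\partial f(\bar x)$ should be argued via the segment $[0,c]$ for $c\in\partial f(\bar x)$ with $\|c\|$ near the infimum (the first exit point of that segment lies on the boundary and has norm at most $\|c\|$); (2) the degenerate case $\partial f(\bar x)=\emptyset$ should be mentioned --- it is in fact covered by your weak$^*$-compactness argument, which shows that a bounded sequence of subgradients at points converging to $\bar x$ would force $\partial f(\bar x)\ne\emptyset$, so that ${\overline{|\partial{f}|}{}^>(\bar{x})}=+\infty$ in that case and \eqref{lo.a} holds trivially.
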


Constant ${\overline{|\partial{f}|}{}^>(\bar{x})}$ is known as the \emph{strict outer subdifferential slope}
\cite{FabHenKruOut10}
%$\overline{|\partial{f}|}{}^>(\bar{x})$
of $f$ at $\bar{x}$.
{We are going to call $|\partial{f}|{}_{\rm bd}(\bar{x})$ the \emph{boundary subdifferential slope}
of $f$ at $\bar{x}$.}
The sufficient criterion (i) was used in
\cite[Theorem~2.1~(c)]{IofOut08}, \cite[Corollary~2~(ii)]{NgaThe09},
\cite[Theorem~4.12]{Pen10}, \cite[Theorem 3.1]{WuYe01}. Criterion (ii) was used in \cite[Corollary~3.4]{HenJou02},
\cite[Theorem~4.2]{HenOut01}.
The equality in (\ref{lo.a}) is well known.
See also
characterizations of linear and nonlinear conditionings in
\cite[Theorem~5.2]{CorJouZal97}.

The inequality in (\ref{lo.a}) can be strict.

\begin{example}\label{gl}
1. $f(x)\equiv0$, $x\in\R$. Obviously $0\in\bd\partial{f}(\bar{x})$,
${|\partial{f}|{}_{\rm bd}(\bar{x})}=0$, while ${\overline{|\partial{f}|}{}^>(0)}=\infty$ for any $\bar{x}\in\R$.
\sloppy

2. $f(x)=0$ if $x\le0$, and $f(x)=x$ if $x>0$. Then
$\partial{f}(0)=[0,1]$ and $0\in\bd\partial{f}(0)$,
${|\partial{f}|{}_{\rm bd}(0)}=0$, while ${\overline{|\partial{f}|}{}^>(0)}=1$.

3. If $f:\mathbb{R}^{m}\rightarrow \mathbb{R}$ is
convex, thanks to \cite[Theorem 3.1]{CanHanParTol15}, \begin{equation*}
\mathrm{bd\ }\partial f\left( \overline{x}\right) =\ \underset{x\rightarrow
\overline{x},\;x\neq \overline{x}}{\limsup }\partial f\left( x\right) ,
\end{equation*}%
and ${|\partial{f}|{}_{\rm bd}(\bar{x})}$ can be easily computed:
\begin{equation*}
{|\partial{f}|{}_{\rm bd}(\bar{x})}=\underset{x\rightarrow \overline{x},\;x\neq
\overline{x}}{\liminf }d\left( 0,\partial f\left( x\right) \right) .
\end{equation*}

In the particular case when $f$ is
a polyhedral function
$$f(x):=\max_{i=1,\ldots,n}\left(\langle a_{i},x\rangle -b_{i}\right)$$
with
$a_{i}\in \mathbb{R}^{m},$ $b_{i}\in\R$ $(i=1,\ldots,n),$ thanks to \cite[Theorem 3.1]{CanHenLopPar16},
\begin{equation*}
\underset{x\rightarrow \overline{x},\;f\left( x\right) >f\left( \overline{x}%
\right) }{\limsup }\partial f\left( x\right) =\bigcup\limits_{D\in
\mathcal{D}\left( \overline{x}\right) }\mathrm{conv}\left\{ a_{i},\text{ }%
i\in D\right\} ,
\end{equation*}%
where $\mathcal{D}\left( \overline{x}\right) $ denotes the family of all
subsets
$$D\subset I\left( \overline{x}\right) :=\{i=1,\ldots,n\mid f_{i}\left(
x\right) =f\left( x\right) \},$$
such that the system
\begin{equation*}
\left\{
\begin{tabular}{rl}
$\left\langle a_{i},d\right\rangle =1,$ & $i\in D,$ \\
$\left\langle a_{i},d\right\rangle <1,$ & $i\in I\left( \overline{x}\right)
\setminus D$%
\end{tabular}%
\right\}
\end{equation*}%
is consistent (in the variable $d\in \mathbb{R}^{m})$. In other words, $D\in
\mathcal{D}\left( \overline{x}\right) $ if there exists a hyperplane
containing $\{a_{i},$ $i\in D\}$ and such that
\begin{equation*}
\{0\}\cup \{a_{i},i\in I\left( \overline{x}\right) \setminus D\}
\end{equation*}%
lies in one of the open half-spaces determined by this hyperplane.
Hence,
$${\overline{|\partial{f}|}{}^>(\bar{x})}=d\left(0,\bigcup\limits_{D\in
\mathcal{D}\left( \overline{x}\right) }\mathrm{conv}\left\{ a_{i},\text{ }%
i\in D\right)\right).$$

For a function $f:\mathbb{R}^{m}\rightarrow \mathbb{R}$ which is
regular and locally Lipschitz continuous at $\overline{x}$, \cite[Theorem 3.1]{LiMenYan}
 provides lower and upper bounds for \textrm{Er\ }$f(%
\overline{x})$ involving outer limits of subdifferentials of the function $f$
at $\overline{x}$ and the support function of $\partial f\left( \overline{x}%
\right) $ at $0$, respectively.
\xqed\end{example}

Thus, condition (ii) in Theorem~\ref{lo} is in general stronger than condition~(i).
{It imposes restrictions on the behaviour of the function $f$ near $\bx$ not only outside the set $S_f$, but also inside it, particularly excluding the situations as in parts 1 nd 2 of Example~\ref{gl}.
Condition (ii) characterizes a stronger property than just the existence of a local error bound for $f$ at $\bar{x}$ which can be of interest by itself.
In particular,}
it guaranties the
local error bound property for the family of functions being small
perturbations of $f$ in the sense defined below.

Let $f(\bar{x})<\infty$ and $\varepsilon\ge0$.
Following \cite[Definition 5]{KruNgaThe10}, we say that
$g:X\to\R_\infty$ is an \emph{$\varepsilon$-perturbation} of $f$ near $\bar{x}$ if
\begin{gather}\label{io-}
g(\bar{x})=f(\bar{x})
\end{gather}
and
\begin{gather}\label{io}
\limsup_{x\to\bar{x}} \frac{|g(x)-f(x)|}{\|x-\bar{x}\|}\le\varepsilon.
\end{gather}
If both functions $f$ and $g$ are continuous at $\bar{x}$, then equality \eqref{io-} in the above definition is obviously implied by condition \eqref{io}.

The collection of all $\varepsilon$-perturbations of $f$ near $\bar{x}$ will be denoted by $\Ptb(f,\bar{x},\varepsilon)$.
Obviously, if $g\in\Ptb(f,\bar{x},\varepsilon)$, then
$f\in\Ptb(g,\bar{x},\varepsilon)$, and neither $f$ nor $g$ are required to be convex.
If both $f$ and $g$ are convex, then the actual perturbation function $p:=g-f$ needs not to be convex; it is in general a d.c. function (difference of convex functions).

The following subsets of $\Ptb(f,\bar{x},\varepsilon)$ corresponding, respectively, to convex and linear $\varepsilon$-perturbations of $f$ near $\bx$ can be of interest:
\begin{align}\label{rba}
\Ptb_c(f,\bar{x},\varepsilon):=& \{g\in\Ptb(f,\bar{x},\varepsilon)\mid g-f\in\Gamma_0(X)\},
\\\notag
\Ptb_l(f,\bar{x},\varepsilon):=& \{g\mid g(u)-f(u)=\ang{x^*,u-\bx} (u\in X),\; x^*\in\eps\B^*\}.
\end{align}
Obviously,
\begin{align}\label{rep}
\Ptb_l(f,\bar{x},\varepsilon) \subset \Ptb_c(f,\bar{x},\varepsilon) \subset
\Ptb(f,\bar{x},\varepsilon).
\end{align}

The next proposition provides a sufficient condition for a function $g$ to be a convex $\varepsilon$-perturbation of $f$ near $\bar{x}$.

\begin{proposition}
Let $f(\bar{x})<\infty$ and $\varepsilon\ge0$.
%\marg{Can these assumptions be dropped?}
Suppose $g=f+p$ where $p:X\to\R_\infty$ is convex
and Lipschitz continuous near $\bx$ with constant $\eps$ and
$p(\bx)=0$.
Then $g\in\Ptb_c(f,\bar{x},\varepsilon)$.
\end{proposition}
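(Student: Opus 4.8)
The plan is to peel off the definition of $\Ptb_c(f,\bx,\eps)$. Since $g=f+p$, the perturbation function is exactly $g-f=p$, so the membership $g\in\Ptb_c(f,\bx,\eps)$ reduces to two independent tasks: showing $g\in\Ptb(f,\bx,\eps)$, i.e. verifying the two defining relations \eqref{io-} and \eqref{io}, and showing that $p\in\Gamma_0(X)$. I would dispatch these in turn.

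For the first task, condition \eqref{io-} is immediate, as $g(\bx)=f(\bx)+p(\bx)=f(\bx)$ by the assumption $p(\bx)=0$; I would record this directly rather than appeal to the continuity remark following the definition, since $f$ (and hence $g$) is not assumed continuous at $\bx$. For \eqref{io}, I would invoke the Lipschitz hypothesis: for every $x\ne\bx$ in a neighbourhood of $\bx$, using $p(\bx)=0$ and the Lipschitz constant $\eps$,
\[
\frac{|g(x)-f(x)|}{\|x-\bx\|}=\frac{|p(x)-p(\bx)|}{\|x-\bx\|}\le\eps,
\]
whence passing to the $\limsup$ as $x\to\bx$ gives exactly \eqref{io}. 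This settles $g\in\Ptb(f,\bx,\eps)$.

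It remains to place $p$ in $\Gamma_0(X)$, i.e. to check that $p$ is proper, convex and lower semicontinuous. Convexity is a hypothesis, and properness is immediate: $p(\bx)=0<+\infty$ gives $\dom p\ne\emptyset$, while $p$ taking values in $\R_\infty$ rules out the value $-\infty$. The one step that genuinely requires care — and which I expect to be the main obstacle — is lower semicontinuity, since a function that is merely convex and Lipschitz in a neighbourhood of the single point $\bx$ need not be lower semicontinuous globally (in a Banach space even a finite convex function may fail to be lsc, e.g. a discontinuous linear functional, and a convex function may jump on the boundary of its effective domain). I would therefore work under the natural reading in which $p$ is lower semicontinuous, so that the Lipschitz assumption serves only to furnish the modulus $\eps$ entering \eqref{io}; with this, $p\in\Gamma_0(X)$. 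Combining the two tasks yields $g\in\Ptb_c(f,\bx,\eps)$, as required.
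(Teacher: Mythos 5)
Your proof is correct, and on the substantive point --- condition \eqref{io} --- it is a little more direct than the paper's. The published proof first extracts a subgradient $x^*\in\sd p(\bx)$ (nonempty, with $\|x^*\|\le\eps$, by the local Lipschitz property of the convex function $p$) and uses the subgradient inequality $p(x)\ge\ang{x^*,x-\bx}\ge-\eps\|x-\bx\|$ for the lower estimate, reserving the Lipschitz bound for the upper estimate $\limsup_{x\to\bx}p(x)/\|x-\bx\|\le\eps$; you instead read off the two-sided bound $|p(x)|=|p(x)-p(\bx)|\le\eps\|x-\bx\|$ directly from Lipschitz continuity on a neighbourhood of $\bx$, which suffices because \eqref{io} is a purely local condition. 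Your explicit check of \eqref{io-} is also fine. Where you go beyond the paper is in verifying $p=g-f\in\Gamma_0(X)$, which the published proof omits entirely, and your caution there is justified: lower semicontinuity of $p$ does not follow from the stated hypotheses. For instance, on $\R$ with $\bx=0$, the function $p$ equal to $0$ on $]-\infty,1[$, to $1$ at $x=1$ and to $+\infty$ on $]1,+\infty[$ is convex, Lipschitz near $0$ with constant $0$ and satisfies $p(0)=0$, yet fails to be lsc at $x=1$, so $p\notin\Gamma_0(X)$ and the conclusion fails under the literal definition \eqref{rba}. Adding lsc of $p$ as a (clearly intended) hypothesis, as you do, is the right repair; this is a genuine tightening of the statement rather than a gap in your argument.
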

\begin{proof}
Thanks to the Lipschitz continuity of $p$,
$\sd p(\bx)\ne\emptyset$
and $\|x^*\|\le\eps$ for all $x^*\in\sd p(\bx)$;
{cf. \cite[Proposition~4.1.25 and its proof]{BorVan10}.}
Given an $x^*\in\sd p(\bx)$, for any $x\in X\setminus\{\bx\}$, we have:
\begin{equation*}
\frac{g(x)-f(x)}{\|x-\bar{x}\|}=\frac{p(x)}{\|x-\bar{x}\|} \ge\frac{\ang{x^*,x-\bx}}{\|x-\bar{x}\|}\ge-\eps.
\end{equation*}
On the other hand,
\begin{equation*}
\limsup_{x\to\bar{x}}\frac{g(x)-f(x)}{\|x-\bar{x}\|} =\limsup_{x\to\bar{x}}\frac{p(x)}{\|x-\bar{x}\|}
\le\eps.
\end{equation*}
Combining the two inequalities proves \eqref{io}.
\qed\end{proof}

The assumption
{of the Lipschitz continuity of $p$}
in the above proposition is essential.
%\todo[inline]{Shall we drop this example?}
\begin{example}
Let $f,p,g:\R\to\R_\infty$, $f(x)=0$ for all $x\in\R$ and
$$
p(x)=g(x)=
\begin{cases}
0& \mbox{if } x\le0,
\\
+\infty& \mbox{if } x>0.
\end{cases}
$$
Then $f,g,p\in\Gamma_0(X)$ and
\begin{equation*}
\limsup_{x\to0} \frac{|g(x)-f(x)|}{|x|}=+\infty.
\end{equation*}
\xqed\end{example}

Given a function $f\in\Gamma_0(X)$ with $f(\bar{x})=0$ and a number $\eps\ge0$, denote
\begin{align}\label{ErPtb}
\Er\{\Ptb(f,\bar{x},\varepsilon)\}(\bar{x}):=&
\inf_{g\in\Gamma_0(X)\cap\Ptb(f,\bar{x},\varepsilon)}\Er{g}(\bar{x}),
\\\notag
\Er\{\Ptb_c(f,\bar{x},\varepsilon)\}(\bar{x}):=&
\inf_{g\in\Ptb_c(f,\bar{x},\varepsilon)}\Er{g}(\bar{x}),
\\\notag
\Er\{\Ptb_l(f,\bar{x},\varepsilon)\}(\bar{x}):=&
\inf_{g\in\Ptb_l(f,\bar{x},\varepsilon)}\Er{g}(\bar{x}).
\end{align}
These numbers characterize the error bound property for families of $\eps$-per\-tur\-bations of $f$ near $\bar{x}$.
Thanks to \eqref{rep}, it holds
\begin{align}\notag
\Er\{\Ptb(f,\bar{x},\varepsilon)\}(\bar{x}) &\le\Er\{\Ptb_c(f,\bar{x},\varepsilon)\}(\bar{x})
\\\label{ert}
&\le\Er\{\Ptb_l(f,\bar{x},\varepsilon)\}(\bar{x})
\le\Er{f}(\bar{x})
\end{align}
for any $\eps\ge0$.

\begin{theorem}\label{ge}
Let $f\in\Gamma_0(X)$ and $f(\bar{x})=0$. Then
\begin{align}\notag
{|\partial{f}|{}_{\rm bd}(\bar{x})} &=\inf\{\eps>0\mid \Er\{\Ptb(f,\bar{x},\varepsilon)\}(\bar{x})=0\}
\\\notag
&=\inf\{\eps>0\mid \Er\{\Ptb_c(f,\bar{x},\varepsilon)\}(\bar{x})=0\}
\\\label{ge.01}
&=\inf\{\eps>0\mid \Er\{\Ptb_l(f,\bar{x},\varepsilon)\}(\bar{x})=0\}.
\end{align}
\end{theorem}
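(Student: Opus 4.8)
The plan is to establish the two outer equalities by a squeezing argument. Write $r:=|\partial f|{}_{\rm bd}(\bx)=d(0,\bd\partial f(\bx))$ and $R_{\mathrm a}:=\inf\{\eps>0\mid \Er\{\Ptb(f,\bx,\eps)\}(\bx)=0\}$, with $R_{\mathrm c}$ and $R_{\mathrm l}$ defined analogously through $\Ptb_c$ and $\Ptb_l$. The monotonicity \eqref{ert} shows that $\Er\{\Ptb_l(f,\bx,\eps)\}(\bx)=0$ forces $\Er\{\Ptb_c(f,\bx,\eps)\}(\bx)=\Er\{\Ptb(f,\bx,\eps)\}(\bx)=0$, so the vanishing set of $\eps$ grows as the family shrinks, whence $R_{\mathrm a}\le R_{\mathrm c}\le R_{\mathrm l}$. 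It therefore suffices to prove $r\le R_{\mathrm a}$ (small perturbations preserve the error bound) and $R_{\mathrm l}\le r$ (perturbations of size exceeding $r$ may destroy it); combined with the chain, these collapse all three quantities to $r$.

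For $r\le R_{\mathrm a}$ I would first record a subdifferential estimate at $\bx$. Fix $\eps\ge0$ and $g\in\Gamma_0(X)\cap\Ptb(f,\bx,\eps)$. For any $x^*\in\partial g(\bx)$ and direction $d$, passing to the limit along the ray $\bx+td$, $t\downarrow0$, in \eqref{io}, together with $g(\bx)=f(\bx)$ from \eqref{io-}, gives $\ang{x^*,d}\le g'(\bx;d)\le f'(\bx;d)+\eps\|d\|$; since the right-hand side is the support function of $\partial f(\bx)+\eps\B^*$, this yields $\partial g(\bx)\subseteq\partial f(\bx)+\eps\B^*$, and, as $f\in\Ptb(g,\bx,\eps)$ too, also $\partial f(\bx)\subseteq\partial g(\bx)+\eps\B^*$, so the two subdifferentials lie within Hausdorff distance $\eps$. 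A short convex-geometry lemma then shows that such a move cannot bring the boundary closer to the origin than $r-\eps$: if some $b\in\bd\partial g(\bx)$ had $\|b\|<r-\eps$, then (in the main case $0\in\Int\partial f(\bx)$) a ball $b+\rho\B^*\subseteq\partial f(\bx)$ with $\rho=r-\|b\|>\eps$ would, via $\partial f(\bx)\subseteq\partial g(\bx)+\eps\B^*$ and a separation argument, force $b+(\rho-\eps)\B^*\subseteq\partial g(\bx)$, i.e. $b\in\Int\partial g(\bx)$, a contradiction (the cases $0\in\bd\partial f(\bx)$ and $0\notin\partial f(\bx)$ being vacuous, respectively elementary). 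Hence $|\partial g|{}_{\rm bd}(\bx)\ge r-\eps$, and by \eqref{lo.a} applied to $g$ we obtain $\Er g(\bx)\ge r-\eps$ for \emph{every} such $g$. Thus $\Er\{\Ptb(f,\bx,\eps)\}(\bx)\ge r-\eps>0$ whenever $\eps<r$, which is exactly $r\le R_{\mathrm a}$.

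For $R_{\mathrm l}\le r$, given any $\eps>r$, I would construct linear $\eps$-perturbations with arbitrarily small error bound modulus. Pick $w^*\in\bd\partial f(\bx)$ with $\|w^*\|<\eps$ and set $g_{x^*}:=f-\ang{x^*,\cdot-\bx}\in\Ptb_l(f,\bx,\eps)$ for $x^*$ near $w^*$ with $\|x^*\|\le\eps$, so that $\partial g_{x^*}(\bx)=\partial f(\bx)-x^*$ and $\bd\partial g_{x^*}(\bx)=\bd\partial f(\bx)-x^*$. When $x^*$ can be chosen in $\Int\partial f(\bx)$ close to $w^*$, the origin lands just inside $\partial g_{x^*}(\bx)$, making $\bx$ a sharp isolated minimizer; then $\Er{g_{x^*}}(\bx)=\inf_{\|d\|=1}g_{x^*}'(\bx;d)=d(0,\bd\partial g_{x^*}(\bx))=d(x^*,\bd\partial f(\bx))\to0$ as $x^*\to w^*$, so $\inf_{g\in\Ptb_l(f,\bx,\eps)}\Er g(\bx)=0$ and $R_{\mathrm l}\le\eps$; letting $\eps\downarrow r$ gives $R_{\mathrm l}\le r$.

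The hard part is the remaining configuration of this last step, when $\Int\partial f(\bx)=\emptyset$ (or $\bx$ is flat, so $g_{x^*}$ has no nearby points with $g_{x^*}>0$). Then the origin cannot be pushed into the interior of $\partial g_{x^*}(\bx)$, and the smallness of the error bound modulus must be realized through the \emph{outer} slope ${\overline{|\partial g_{x^*}|}{}^>(\bx)}$: one needs points $x_k\to\bx$ with $g_{x^*}(x_k)>0$ and subgradients $x_k^*\in\partial g_{x^*}(x_k)$, $\|x_k^*\|\to0$, i.e. $0\in\cl\big(\limsup_{x\to\bx,\,g_{x^*}(x)>0}\partial g_{x^*}(x)\big)$. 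In finite dimensions this follows at once from the identification $\bd\partial f(\bx)=\limsup_{x\to\bx,\,x\ne\bx}\partial f(x)$ recorded in Example~\ref{gl}, but in a general Banach space producing \emph{exact} subgradients at nearby points out of the mere membership $w^*\in\bd\partial f(\bx)$ calls for the Br\o ndsted--Rockafellar theorem, while the sign constraint $g_{x^*}(x_k)>0$ must be secured by an additional infinitesimal tilt of $x^*$ (as in part~2 of Example~\ref{gl}). Carrying this through, and confirming that the convex-geometry lemma of the second paragraph survives the degenerate cases, is where the real work lies.
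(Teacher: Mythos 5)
Your overall architecture coincides with the paper's: use \eqref{ert} to reduce the three equalities to the two outer inequalities, prove $|\sd f|{}_{\rm bd}(\bx)\le R_{\mathrm a}$ by showing every $g\in\Gamma_0(X)\cap\Ptb(f,\bx,\eps)$ satisfies $\Er g(\bx)\ge|\sd f|{}_{\rm bd}(\bx)-\eps$ (the paper simply cites \cite[Theorem~8~(i)]{KruNgaThe10} here; your Hausdorff-distance argument $\sd g(\bx)\subset\sd f(\bx)+\eps\B^*$ plus the cancellation lemma is a reasonable self-contained substitute, modulo the care needed because $f'(\bx;\cdot)$ is only the support function of $\sd f(\bx)$ after taking its lower semicontinuous hull), and then prove $R_{\mathrm l}\le|\sd f|{}_{\rm bd}(\bx)$ by exhibiting destabilizing linear perturbations. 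The problem is that this last inequality --- the actual content of the theorem --- is not established. You complete it only when $x^*$ can be taken in $\Int\sd f(\bx)$, and you explicitly defer the complementary case (``where the real work lies''). That case is not exotic: $\Int\sd f(\bx)=\emptyset$ already for smooth $f$, for polyhedral $f$ with a lower-dimensional subdifferential, and generically in infinite dimensions, so the argument as written proves the theorem only for a restricted class of functions.

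Two further points. First, even in your ``easy'' case the chain $\Er g_{x^*}(\bx)=\inf_{\|d\|=1}g_{x^*}'(\bx;d)=d\bigl(0,\bd\sd g_{x^*}(\bx)\bigr)$ is asserted, not proved; the paper stresses that the inequality in \eqref{lo.a} can be strict, and for $f\in\Gamma_0(X)$ not continuous at $\bx$ the identification of $g'(\bx;\cdot)$ with the support function of $\sd g(\bx)$ fails, so this step needs justification. Second, the paper's proof shows that no case distinction is needed: from $x^*\in\bd\sd f(\bx)$ with $\|x^*\|<\eps$ one picks $\hat x^*\notin\sd f(\bx)$ with $\|\hat x^*-x^*\|<\xi$, obtains a witness point $\hat x$ with $f(\hat x)<\ang{\hat x^*,\hat x-\bx}$, and perturbs by the linear functional $-x^*+\xi z^*$ (the extra tilt $\xi z^*$, with $z^*$ norming $\hat x-\bx$, is what forces $g(x_k)>0$ along the segment $x_k=\bx+\frac1k(\hat x-\bx)$); Ekeland's variational principle applied to $g_+$ then produces $u_k\to\bx$ with $g(u_k)>0$ and $d(0,\sd g(u_k))\le 2\xi$, and one concludes via $\Er g(\bx)=\overline{|\sd g|}{}^>(\bx)$. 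These are precisely the tools you name (tilting plus a variational principle), but naming them is not carrying them out: the delicate points are guaranteeing $g(u_k)>0$ after the Ekeland step (the paper needs the lower bound $g\ge\xi\ang{z^*,\cdot-\bx}$ together with $\|u_k-x_k\|\le\la$ for this) and getting the quantitative estimate $d(0,\sd g(u_k))\le2\xi$ rather than mere membership of $0$ in a closed limit set. Until that construction is executed, the inequality $R_{\mathrm l}\le|\sd f|{}_{\rm bd}(\bx)$, and hence the theorem, remains unproved.
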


\begin{proof}
Thanks to the first two inequalities in \eqref{ert}, we always have
\begin{align}\notag
&\inf\{\eps>0\mid \Er\{\Ptb(f,\bar{x},\varepsilon)\}(\bar{x})=0\}
\\\notag
\le&\inf\{\eps>0\mid \Er\{\Ptb_c(f,\bar{x},\varepsilon)\}(\bar{x})=0\}
\\\label{ge.07}
\le&\inf\{\eps>0\mid \Er\{\Ptb_l(f,\bar{x},\varepsilon)\}(\bar{x})=0\}.
\end{align}
We are going to show that
\begin{align}\label{ge.02}
{|\partial{f}|{}_{\rm bd}(\bar{x})}\le\inf\{\eps>0\mid \Er\{\Ptb(f,\bar{x},\varepsilon)\}(\bar{x})=0\}
\end{align}
and
\begin{align}\label{ge.03}
\inf\{\eps>0\mid \Er\{\Ptb_l(f,\bar{x},\varepsilon)\}(\bar{x})=0\} \le{|\partial{f}|{}_{\rm bd}(\bar{x})}.
\end{align}
By \cite[Theorem~8~(i)]{KruNgaThe10},
$
\Er{g}(\bar{x}) \ge{|\partial{f}|{}_{\rm bd}(\bar{x})}-\varepsilon
$
for any $\varepsilon>0$ and any $g\in\Gamma_0(X)\cap\Ptb(f,\bar{x},\varepsilon)$.
By definition \eqref{ErPtb}, we have
\begin{gather*}
{|\partial{f}|{}_{\rm bd}(\bar{x})} \le\Er\{\Ptb(f,\bar{x},\varepsilon)\}(\bar{x})+\eps
\end{gather*}
for any $\varepsilon>0$, and consequently ${|\partial{f}|{}_{\rm bd}(\bar{x})} \le\eps$ if $\Er\{\Ptb(f,\bar{x},\varepsilon)\}(\bar{x})=0$, which yields inequality \eqref{ge.02}.

If $\varsigma(f,\bx)=\infty$, inequality \eqref{ge.03} holds trivially.
Let ${|\partial{f}|{}_{\rm bd}(\bar{x})}<\eps<\infty$.
We are going to show that
$\Er\{\Ptb_l(f,\bar{x},\varepsilon)\}(\bar{x})=0$.
By the definition of ${|\partial{f}|{}_{\rm bd}(\bar{x})}$, there exists an $x^*\in\bd\partial{f}(\bar{x})$ such that $\|x^*\|<\eps$.
Choose a positive number $\xi<\varepsilon -\Vert x^{\ast}\Vert$.
Thus,
\begin{gather}\label{ge.05}
f(u)\ge\langle x^*,u-\bx\rangle\quad \mbox{for all } {u}\in X,
\end{gather}
and there exists an $\hat x^*\in X^*$ such that $\|\hat x^*-x^*\|<\xi$ and $\hat x^*\notin\sd f(\bx)$,
which means that
there exists a point $\hat x\in X\setminus\{\bx\}$ satisfying
\begin{gather*}
{f}(\hat x)={f}(\hat x)-f(\bx)< \langle\hat x^*,\hat x-\bar{x}\rangle.
\end{gather*}
Thanks to the convexity of $f$, one also has
\begin{gather}\label{ist2}
{f}(x_k)<\langle\hat x^*,x_k-\bar{x}\rangle,\quad k=1,2,\ldots,
\end{gather}
where $x_k:=\bx+\frac{1}{k}(\hat x-\bx)$.
%so we can suppose without loss of generality that $\|\hat x-\bar{x}\|<\xi$.
Now select a $z^*\in X^*$ such that $\|z^*\|=1$ and $\ang{z^*,\hat x-\bar{x}}=\|\hat x-\bar{x}\|$, and define %$\hat z^*:=\|\hat x-\bar{x}\|z^*$ as well as
\begin{align}\label{ral}
g(u):=f(u)+\langle-x^*+\xi z^*,u-\bx\rangle,\quad u\in X.
\end{align}
One has
$\Vert -x^{\ast}+\xi z^*\Vert\leq \Vert x^{\ast }\Vert +\xi <\varepsilon,$
and consequently, $g\in\Ptb_l(f,\bar{x},\varepsilon)$.
It follows from \eqref{ge.05} and \eqref{ral} that
\begin{gather}\label{ral2}
g(u)\ge\xi\langle z^*,u-\bx\rangle\quad \mbox{for all } {u}\in X.
\end{gather}
In particular,
\begin{align}\label{ral3}
g(x_k)\ge\xi\langle z^*,x_k-\bx\rangle =\frac{\xi}{k}\|\hat x-\bar{x}\|>0,\quad k=1,2,\ldots
\end{align}
At the same time, by \eqref{ist2} and \eqref{ral},
\begin{align}\label{qr}
g(x_k)< \langle\hat x^*-x^*+\xi z^*,x_k-\bar{x}\rangle,\quad k=1,2,\ldots
\end{align}
For a fixed $k$, choose a positive $t<1$ such that
$g(x_k)< t\langle\hat x^*-x^*+\xi z^*,x_k-\bar{x}\rangle$ and denote
$$\eta:=t\langle\hat x^*-x^*+\xi z^*,x_k-\bar{x}\rangle>0,\quad\la:=\frac{t}{k}\|\hat x-\bar{x}\|>0.$$
Thus, $g(x_k)<\eta$.
By virtue of the Ekeland variational principle \cite{Eke74} (see also \cite[Theorem~4.3.1]{BorVan10}, \cite[Theorem~4B.5]{DonRoc14}) applied to the function $u\mapsto g_+(u):=\max\{g(u),0\}$, there exists a point $u_k\in X$ such that
\begin{gather}\label{bar1}
\|u_k-x_k\|\le\la,
%\\\notag
% g_+(\hat u)\le g(\hat x),
\\\label{bar3}
g_+(u)+\frac{\eta}{\la}\|u-u_k\|\ge g_+(u_k) \quad \mbox{for all } {u}\in X.
\end{gather}
By \eqref{bar1} and the definitions of $\la$ and $x_k$, we have
\begin{gather}\label{uk}
\|u_k-\bx\|\le\|u_k-x_k\|+\|x_k-\bx\|\le\la+\|x_k-\bx\|< \frac{2}{k}\|\hat x-\bar{x}\|.
\end{gather}
By \eqref{ral2} and \eqref{bar1} and the definitions of $x_k$, $\la$ and $z^*$,
\begin{align}\notag
g(u_k)\ge\xi\langle z^*,u_k-\bx\rangle&=\xi\langle z^*,x_k-\bx\rangle+\xi\langle z^*,u_k-x_k\rangle
\\\label{eqr}
&\ge\frac{\xi}{k}\|\hat x-\bx\|-\xi\la=(1-t)\frac{\xi}{k}\|\hat x-\bx\|>0,
\end{align}
and consequently, $ g_+(u_k)=g(u_k)$.
Thanks to this observation and the fact that $g_+(u)\ge g(u)$ for all $u\in X$, it follows from \eqref{bar3} that $u_k$ is a global minimum of the convex function
$$u\mapsto h(u):=g(u)+\frac{\eta}{\lambda}\left\Vert u-u_k\right\Vert,$$
and so, $0\in \partial h(u_k)=\partial g(u_k)+\frac{\eta}{\lambda}\mathbb{B}^{\ast}$, i.e.,
\begin{align}\notag
d(0,\sd g(u_k))\le\frac{\eta}{\la}&= \frac{\langle\hat x^*-x^*+\xi z^*,x_k-\bar{x}\rangle} {\frac{1}{k}\|\hat x-\bar{x}\|}
\\\notag
&= \frac{k\langle\hat x^*-x^*,x_k-\bar{x}\rangle} {\|\hat x-\bar{x}\|} +\xi
\\\label{eqr2}
&\le\|\hat x^*-x^*\|+\xi<2\xi.
\end{align}
By \eqref{uk}, we have $u_k\to\bx$ as $k\to\infty$.
Thanks to \eqref{eqr} and \eqref{eqr2}, it follows from Theorem~\ref{lo} that $\Er g(\bx)\le2\xi$.
As $\xi>0$ can be chosen arbitrarily small, we conclude that $\Er\{\Ptb_l(f,\bar{x},\eps)\}(\bar{x})=0$,
which proves \eqref{ge.03}.
\qed\end{proof}

\begin{corollary}\label{re}
Let $f\in\Gamma_0(X)$, $f(\bar{x})=0$ and $\varepsilon\ge0$. Then
\begin{align*}
\varepsilon<{|\partial{f}|{}_{\rm bd}(\bar{x})}
&\;\;\Rightarrow\;\;
\Er\{\Ptb(f,\bar{x},\varepsilon)\}(\bar{x})>0
\\
&\;\;\Rightarrow\;\;
\Er\{\Ptb_c(f,\bar{x},\varepsilon)\}(\bar{x})>0
\\
&\;\;\Rightarrow\;\;
\Er\{\Ptb_l(f,\bar{x},\varepsilon)\}(\bar{x})>0
\;\;\Rightarrow\;\;
\varepsilon\le{|\partial{f}|{}_{\rm bd}(\bar{x})}.
\end{align*}
\end{corollary}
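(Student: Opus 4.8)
The plan is to read off the entire chain from Theorem~\ref{ge} and the ordering \eqref{ert}, the only auxiliary ingredient being the elementary remark that the families $\Ptb(f,\bx,\varepsilon)$, $\Ptb_c(f,\bx,\varepsilon)$ and $\Ptb_l(f,\bx,\varepsilon)$ all grow with $\varepsilon$, so that each of the three quantities in \eqref{ErPtb} is nonincreasing in $\varepsilon$. The two middle implications then require nothing beyond \eqref{ert}: from $\Er\{\Ptb(f,\bx,\varepsilon)\}(\bx)\le\Er\{\Ptb_c(f,\bx,\varepsilon)\}(\bx)\le\Er\{\Ptb_l(f,\bx,\varepsilon)\}(\bx)$, positivity of a term forces positivity of the one immediately to its right.

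For the leading implication I would invoke the first equality of Theorem~\ref{ge}, namely ${|\partial{f}|{}_{\rm bd}(\bx)}=\inf\{\eps>0\mid \Er\{\Ptb(f,\bx,\varepsilon)\}(\bx)=0\}$. Assume $\varepsilon<{|\partial{f}|{}_{\rm bd}(\bx)}$. If $\varepsilon>0$ and $\Er\{\Ptb(f,\bx,\varepsilon)\}(\bx)=0$ were to hold, then $\varepsilon$ would lie in the set whose infimum is ${|\partial{f}|{}_{\rm bd}(\bx)}$, giving ${|\partial{f}|{}_{\rm bd}(\bx)}\le\varepsilon$ and a contradiction; hence $\Er\{\Ptb(f,\bx,\varepsilon)\}(\bx)>0$. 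The boundary case $\varepsilon=0$ I would reduce to this by fixing $\eps'$ with $0<\eps'<{|\partial{f}|{}_{\rm bd}(\bx)}$ and using $\Ptb(f,\bx,0)\subseteq\Ptb(f,\bx,\eps')$ to obtain $\Er\{\Ptb(f,\bx,0)\}(\bx)\ge\Er\{\Ptb(f,\bx,\eps')\}(\bx)>0$.

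The closing implication I would establish by contraposition from the third equality of Theorem~\ref{ge}. If $\varepsilon>{|\partial{f}|{}_{\rm bd}(\bx)}=\inf\{\eps>0\mid \Er\{\Ptb_l(f,\bx,\varepsilon)\}(\bx)=0\}$, the definition of the infimum yields some $\eps'<\varepsilon$ with $\Er\{\Ptb_l(f,\bx,\eps')\}(\bx)=0$; since $\eps'\B^*\subseteq\varepsilon\B^*$ gives $\Ptb_l(f,\bx,\eps')\subseteq\Ptb_l(f,\bx,\varepsilon)$, monotonicity forces $\Er\{\Ptb_l(f,\bx,\varepsilon)\}(\bx)=0$, so that $\Er\{\Ptb_l(f,\bx,\varepsilon)\}(\bx)>0$ indeed implies $\varepsilon\le{|\partial{f}|{}_{\rm bd}(\bx)}$. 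I do not anticipate a genuine obstacle here: once Theorem~\ref{ge} is in hand the corollary is essentially bookkeeping, and the only points deserving care are the boundary case $\varepsilon=0$ in the leading implication and the strict-versus-nonstrict handling of the two infima, both settled by the monotonicity remark above.
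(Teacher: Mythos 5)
Your proposal is correct and matches the intended derivation: the paper states Corollary~\ref{re} without a separate proof precisely because it follows from the three equalities of Theorem~\ref{ge} together with the ordering \eqref{ert}, which is exactly the bookkeeping you carry out (your monotonicity remark handling the $\varepsilon=0$ case and the strict-versus-nonstrict infima is the right way to make it airtight). The only cosmetic difference is that the first implication can be read off even more directly from the inequality $|\partial f|_{\rm bd}(\bar{x})\le\Er\{\Ptb(f,\bar{x},\varepsilon)\}(\bar{x})+\varepsilon$ established inside the proof of Theorem~\ref{ge}, but your route via the infimum characterization is equivalent.
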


\begin{remark}
1. Theorem~\ref{ge} strengthens \cite[Theorem~8 and Corollary~9]{KruNgaThe10} which establish inequality \eqref{ge.02} for any ${|\partial{f}|{}_{\rm bd}(\bar{x})}$ as well as
the first equality in
\eqref{ge.01} in the case ${|\partial{f}|{}_{\rm bd}(\bar{x})}=0$.
The above proof of inequality \eqref{ge.02} is more straightforward than that of the corresponding one in
\cite[Theorem~8~(ii)]{KruNgaThe10}.

2.
Thanks to Theorem~\ref{ge}, the quantity ${|\partial{f}|{}_{\rm bd}(\bar{x})}$ can be interpreted as the \emph{radius of error bounds} for a family of (arbitrary or convex or linear) perturbations of $f$ at $\bar{x}$.

3.
The given above proof of the inequality \eqref{ge.03} is constructive: for an $\eps>{|\partial{f}|{}_{\rm bd}(\bar{x})}$ and an arbitrarily small $\xi>0$, the linear $\varepsilon$-perturbation $g$ of $f$ near $\bar{x}$ satisfying $\Er g(\bx)\le2\xi$
is given by \eqref{ral} where $x^*$ is any element from $\bd\sd f(\bx)$ satisfying $\|x^*\|<\eps$ while element $\hat z^*$ is fully determined by an arbitrary $\hat x^*\notin\sd f(\bx)$ with $\|\hat x^*-x^*\|<\xi$.
\xqed\end{remark}

\section{Stability of global error bounds}\label{S3}

This section deals with the global error bound property for the con\-straint  system (\ref{2}) with a convex function $f:X\rightarrow\R_\infty$.
The next theorem extracted from \cite[Theorem~22]{KruNgaThe10} represents a nonlocal analogue of Theorem~\ref{lo}.

\begin{theorem}\label{li}
Let $f\in\Gamma_0(X)$ and $S_f\ne\emptyset$.
Function $f$ has a global error bound provided that one of the following two conditions is satisfied:
\begin{enumerate}
\item
${|\partial{f}|{}^>}:=\inf_{f(x)>0}d(0,\partial{f}(x))>0$;
\item
${|\partial{f}|{}_{\rm bd}}:=\inf_{f(x)=0}d(0,\bd\partial{f}(x))>0$.
\end{enumerate}
Moreover, condition {\rm (i)} is also necessary for $f$ to have a global error bound and
\begin{equation}\label{li-1}
{|\partial{f}|{}_{\rm bd}}\le{|\partial{f}|{}^>}=\Er{f}.
\end{equation}
\end{theorem}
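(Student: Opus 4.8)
The plan is to base everything on the single equality ${|\partial{f}|{}^>}=\Er{f}$: it yields simultaneously the necessity of (i), the sufficiency of (i), and the second relation in \eqref{li-1}, after which only the inequality ${|\partial{f}|{}_{\rm bd}}\le{|\partial{f}|{}^>}$ (and, with it, the sufficiency of (ii)) remains. The easy half ${|\partial{f}|{}^>}\ge\Er{f}$ is pointwise: fix $x$ with $f(x)>0$ and $x^*\in\partial f(x)$ (necessarily $x^*\neq0$, since $0\in\partial f(x)$ would make $x$ a global minimiser and force $f(x)\le0$); the subgradient inequality gives $f(x)\le\ang{x^*,x-y}\le\|x^*\|\,\|x-y\|$ for every $y\in S_f$, so $\|x^*\|\ge f(x)/d(x,S_f)$ after taking the infimum over $y$. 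Hence $d(0,\partial f(x))\ge f(x)/d(x,S_f)$, and passing to the infimum over $\{f>0\}$ yields the claim.

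For the harder inequality $\Er{f}\ge{|\partial{f}|{}^>}$ I would use the Ekeland variational principle \cite{Eke74}, in the spirit of the proof of Theorem~\ref{ge}. Arguing by contradiction, suppose $f(x_0)<\tau\,d(x_0,S_f)$ for some $x_0$ with $f(x_0)>0$ and some $\tau<{|\partial{f}|{}^>}$, and pick $\lambda$ with $f(x_0)/\tau<\lambda<d(x_0,S_f)$. Since $S_f\neq\emptyset$, the function $f_+$ is proper, lower semicontinuous, bounded below, and $\inf f_+=0$; Ekeland's principle applied with $\varepsilon=f(x_0)$ produces a point $u$ with $\|u-x_0\|\le\lambda$ that globally minimises $v\mapsto f_+(v)+(\varepsilon/\lambda)\|v-u\|$. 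Because $\|u-x_0\|\le\lambda<d(x_0,S_f)$ we have $u\notin S_f$, so $f(u)>0$ and, by lower semicontinuity, $f_+=f$ near $u$; the Moreau--Rockafellar sum rule then gives $0\in\partial f(u)+(\varepsilon/\lambda)\B^{*}$, i.e. $d(0,\partial f(u))\le f(x_0)/\lambda<\tau<{|\partial{f}|{}^>}$, which contradicts $f(u)>0$. Letting $\tau\uparrow{|\partial{f}|{}^>}$ gives $\Er{f}\ge{|\partial{f}|{}^>}$, hence ${|\partial{f}|{}^>}=\Er{f}$; as $f$ admits a global error bound precisely when $\Er{f}>0$, condition (i) is both necessary and sufficient.

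There remains ${|\partial{f}|{}_{\rm bd}}\le{|\partial{f}|{}^>}$, whence the sufficiency of (ii) is immediate: ${|\partial{f}|{}_{\rm bd}}>0$ forces ${|\partial{f}|{}^>}\ge{|\partial{f}|{}_{\rm bd}}>0$, i.e. (i). If $\{f=0\}$ has nonempty interior then $f\equiv0$ on a ball, so $\partial f(\bar x)=\{0\}$ at its centre, $\bd\partial f(\bar x)=\{0\}$, and ${|\partial{f}|{}_{\rm bd}}=0$; the inequality is trivial. In the remaining case I would attach to a minimising sequence for ${|\partial{f}|{}^>}$ a sequence of zero-level points carrying small boundary subgradients: choose $x_n$ with $f(x_n)>0$ and $x_n^*\in\partial f(x_n)$, $\|x_n^*\|\to{|\partial{f}|{}^>}$; fixing $y\in S_f$, let $\bar x_n$ be the point where the segment $[y,x_n]$ crosses $\{f=0\}$, so that $f(\bar x_n)=0$ with $f>0$ on the sub-segment towards $x_n$. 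Along $[\bar x_n,x_n]$ the subdifferential is monotone and, writing $e_n:=(x_n-\bar x_n)/\|x_n-\bar x_n\|$, the one-sided derivative obeys $f'(\bar x_n;e_n)\le f(x_n)/\|x_n-\bar x_n\|$; a supporting functional of $\partial f(\bar x_n)$ in the direction $e_n$ then lies on $\bd\partial f(\bar x_n)$, and I would realise it, via the representation $\bd\partial f(\bar x)=\Limsup_{u\to\bar x,\,u\neq\bar x}\partial f(u)$ (cf.\ part 3 of Example~\ref{gl}), as a weak$^*$ limit of subgradients at the points $\bar x_n+t(x_n-\bar x_n)$ as $t\downarrow0$.

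The crux, and the step I expect to be genuinely delicate, is the norm bookkeeping in this last construction. The directional estimate controls only the component $\ang{\bar x_n^{*},e_n}$ of the boundary subgradient, not the full norm $\|\bar x_n^{*}\|$ that enters ${|\partial{f}|{}_{\rm bd}}$; and in a general Banach space the relevant subgradients converge only weak$^*$, a topology in which the norm is merely lower semicontinuous. Closing the gap therefore calls for combining the monotonicity of $\partial f$ along the segments, a normal-cone relation at $\bar x_n$ (available under the Slater condition that holds once $\inf f<0$), and a weak$^*$ compactness and lower-semicontinuity argument, so as to guarantee that the extracted boundary subgradient has norm at most $\|x_n^{*}\|+o(1)$. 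Granting this, taking the infimum over $n$ gives ${|\partial{f}|{}_{\rm bd}}\le{|\partial{f}|{}^>}$ and completes the proof.
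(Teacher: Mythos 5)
First, a caveat on the comparison: the paper does not actually prove Theorem~\ref{li} --- it is quoted from \cite[Theorem~22]{KruNgaThe10} --- so your argument has to be judged on its own merits. The first two thirds of it are sound and complete. The inequality ${|\partial{f}|{}^>}\ge\Er{f}$ via the subgradient inequality is correct, and your Ekeland argument for $\Er{f}\ge{|\partial{f}|{}^>}$ is the standard one (the same device the paper uses inside the proof of Theorem~\ref{ge}); the passage from ``$u$ minimises $f_++(\varepsilon/\lambda)\|\cdot-u\|$ globally'' to ``$u$ minimises $f+(\varepsilon/\lambda)\|\cdot-u\|$ globally'' correctly goes through a local minimum (where $f_+=f$ because $f(u)>0$ and $f$ is lower semicontinuous) promoted to a global one by convexity. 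So condition (i) is shown to be necessary and sufficient, and the equality in \eqref{li-1} is established.

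The genuine gap is the inequality ${|\partial{f}|{}_{\rm bd}}\le{|\partial{f}|{}^>}$, which is precisely the remaining content of the theorem (it \emph{is} the sufficiency of (ii)), and which you explicitly leave open (``Granting this\dots''). Moreover, the route you sketch cannot work as stated in the setting of the theorem. The representation $\bd\partial f(\bar x)=\Limsup_{u\to\bar x,\,u\ne\bar x}\partial f(u)$ that you borrow from part~3 of Example~\ref{gl} is cited in the paper only for finite-valued convex functions on $\R^m$; it is not available for general $f\in\Gamma_0(X)$ on a Banach space, where in addition $\partial f(\bar x_n)$ may be unbounded (so the supporting functional in the direction $e_n$ need not be attained, hence need not yield a boundary point) or even empty at points of $S_f^=$ lying on the boundary of $\dom f$, in which case $d(0,\bd\partial f(\bar x_n))=+\infty$ and the point is useless for an upper estimate of ${|\partial{f}|{}_{\rm bd}}$. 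The ``norm bookkeeping'' you flag is the real obstruction, not a routine compactness step: monotonicity along $[\bar x_n,x_n]$ controls only $\ang{\cdot\,,e_n}$. A cleaner organisation is the dichotomy the paper itself uses in the proof of Theorem~\ref{Ge}: if $0\in\Int\partial f(\bar y)$ for some $\bar y\in S_f^=$, then $r\B^*\subset\partial f(\bar y)$ for every $r<d(0,\bd\partial f(\bar y))$, whence $f(u)\ge r\|u-\bar y\|$ for all $u$, $S_f=\{\bar y\}$ and $\Er{f}\ge r$, which already gives ${|\partial{f}|{}_{\rm bd}}\le\Er{f}={|\partial{f}|{}^>}$; if instead $0\notin\Int\partial f(y)$ for every $y\in S_f^=$, then $d(0,\bd\partial f(y))=d(0,\partial f(y))$, so no boundary subgradient needs to be extracted at all --- but one must still produce a point of $S_f^=$ carrying an \emph{ordinary} subgradient of norm at most ${|\partial{f}|{}^>}+o(1)$, and your proposal does not do this. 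As it stands, the argument proves (i) and the equality in \eqref{li-1}, but neither (ii) nor the inequality in \eqref{li-1}.
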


{We are going to call $|\partial{f}|{}^>$ and $|\partial{f}|{}_{\rm bd}$, respectively, the \emph{outer subdifferential slope} and the \emph{boundary subdifferential slope}
of $f$.}

Obviously,
{for any $x\in X$ with $f(x)=0$,}
${|\partial{f}|{}^>}\le{\overline{|\partial{f}|}{}^>(x)}$ and \begin{equation}\label{auf}
{|\partial{f}|{}_{\rm bd}}=\inf_{x\in S_f^=}{|\partial{f}|{}_{\rm bd}(x)},
\end{equation}
where $S_f^=:=\{x\in X\mid f(x)=0\}$.
The inequality in \eqref{li-1} guarantees, in particular, that $\|x^*\|\ge{|\partial{f}|{}_{\rm bd}}$ for
any $x^*\in\partial{f}(x)$ with $f(x)>0$.
Parts 1 and 2 of Example~\ref{gl} in Section~\ref{On} are also applicable to global error bounds to show that this inequality can be strict.
{We shall prove in this section, that}
condition (ii) in Theorem~\ref{li} corresponds to the existence of a global error bound for a family of functions being small perturbations of $f$.

Alongside ${|\partial{f}|{}_{\rm bd}}$ and ${|\partial{f}|{}^>}$, we are going to consider a ``localized''
{quantity associated with a point $x\in{S}_f^=$ (and depending also on numbers $\eps\ge0$ and $\de\ge0$):
\begin{gather}\label{taux}
\tau(f,x,\eps,\de):=
\begin{cases}
\inf_{f(u)\ge-\eps\|u-x\|-\de} d(0,\partial{f}(u))& \mbox{if } 0\notin\Int\sd f(x),
\\
d(0,\bd\partial{f}(x))& \mbox{if } 0\in\Int\sd f(x).
\end{cases}
\end{gather}
The two cases in definition \eqref{taux} are mutually exclusive in the sense that
{they cannot happen simultaneously for the same function even at different points:}
if $0\in\Int\sd f(x)$ for some $x\in{S}_f^=$, then ${S}_f={S}_f^==\{x\}$, and consequently, there are no points $x\in{S}_f^=$ where $0\notin\Int\sd f(x)$.
In the second case, we are actually dealing with local error bounds at $x$.
This case has been added for completeness to ensure that $\tau(f,x,\eps,\de)$ is defined for all $x\in{S}_f^=$.
For the purposes of the current paper, only the first case is important.}
Unlike ${|\partial{f}|{}_{\rm bd}}$ and ${|\partial{f}|{}^>}$, when
{$0\notin\Int\sd f(x)$ for all $x\in{S}_f^=$ and}
either $\eps>0$ or $\de>0$, this definition takes into account also certain points $u$ with $f(u)<0$.
Obviously,
\begin{gather}\label{30}
\tau(f,x,\eps,\de)\le{|\partial{f}|{}_{\rm bd}}
\end{gather}
for all
{$x\in{S}_f^=$,}
$\eps\ge0$ and $\de\ge0$,
{and the equality holds when $\eps=\de=0$ or $0\in\sd f(x)$}.
%and $0\notin\Int\sd f(x)$ for all $x\in{S}_f^=$.
Observe that $\tau(f,x,0,\de)$ does not depend on $x$,
{and $\tau(f,x,\eps,\de)$ does not depend on $\eps$ and $\de$ when $0\in\sd f(x)$}.
%If $0\in \partial f(x)$ for a
%certain $x\in S_{f}^{=},$ then $S_{f}^{=}$ is obviously the set of minima of $%
%f$ and $\tau (f,x,\eps,\de)=0$ for all $x\in S_{f}^{=}$ and all $\eps\ge0$ and $\de\ge0$, although ${|\partial{f}|{}_{\rm bd}}$ can be positive.
%For instance, for the function $f(x)=\left\vert
%x\right\vert $ ($x\in\R$) one obviously has ${|\partial{f}|{}_{\rm bd}}=1$.
The function $(\eps,\de)\mapsto\tau(f,x,\eps,\de)$ is nonincreasing in the sense that
$$\tau(f,x,\eps_2,\de_2)\le\tau(f,x,\eps_1,\de_1)$$
if $0\le\eps_1\le\eps_2$ and $0\le\de_1\le\de_2$.

{The next example illustrates the computation and properties of the localized constant \eqref{taux}.
\begin{example}\label{E13}
Consider the piecewise linear function $f:\mathbb{R\rightarrow }\mathbb{R}$ given by
$$
f(x):=\max \{-2x+2,-x+1,2x-5\}=
\begin{cases}
-2x+2 & \mbox{if }x\le1,
\\
-x+1 & \mbox{if }1<x\le2,
\\
2x-5 & \mbox{if }x>2.
\end{cases}
$$
It can be easily verified that $S_{f}^{=}=\{1, 2.5\}$,
${|\partial{f}|{}_{\rm bd}}=1$, ${|\partial{f}|{}^>}=2$, and the only point $u$ where $0\in\sd f(u)$ is $u=2$.
When computing \eqref{taux} with $x=1$ (and nonnegative $\eps$ and $\de$), we see that $2\in\{u\mid f(u)\geq -\varepsilon \left\vert
u-1\right\vert -\delta\}$ if and only if $\varepsilon +\delta\ge1$.
Hence,
\begin{gather}\label{tau1}
\tau (f,1,\varepsilon ,\delta )=\inf_{f(u)\geq -\varepsilon \left\vert
u-1\right\vert -\delta }d(0,\partial f(u))=
\begin{cases}
1 & \mbox{if }\varepsilon +\delta<1,
\\
0 & \mbox{if }\varepsilon +\delta\ge1.
\end{cases}
\end{gather}
For small $\varepsilon$ and $\delta$ (satisfying $\varepsilon +\delta<1$), we have $\tau(f,1,\varepsilon,\delta)={|\partial{f}|{}_{\rm bd}}$.
\xqed\end{example}
}

When examining stability issues of global error bounds, estimates of the difference ${|\partial{f}|{}_{\rm bd}}-\tau(f,x,\eps,\de)$ (which is always nonnegative, cf. \eqref{30}) are needed.
This quantity plays an important role in the next definition.

Let $S_f\neq\emptyset$ and $\varepsilon\ge0$.
We say that
$g:X\to\R_\infty$ is an \emph{$\varepsilon$-per\-turbation} of $f$
if
%$\dom g=\dom f$,
$S_g\neq\emptyset$,
{$g=f+p$ where $p:X\to\R$ is convex},
and there exist a point $x\in{S}_f^=$ and a number $\xi\ge0$ such that
\begin{gather}\label{Io2}
\xi+{|\partial{f}|{}_{\rm bd}}-\tau(f,x,\xi,
%|g(x)-f(x)|
{|p(x)|}
)\le\eps
\end{gather}
and
\begin{gather}\label{Io}
%|g(u)-{f}(u)-g(x)+f(x)|
{|p(u)-p(x)|}
\le\xi\|u-x\| \quad\mbox{for all}\quad {u}\in
%\dom f.
{X}.
\end{gather}
The collection of all $\varepsilon$-perturbations of $f$ will be denoted $\Ptb(f,\varepsilon)$.

{\begin{remark}
The above definition of an $\varepsilon$-per\-turbation is dependent on the existence of a special point $x\in{S}_f^=$ and contains two main components.
First, the perturbing convex function $p$ is required to be globally calm relative to this point with a small calmness constant $\xi$.
Secondly, the difference ${|\partial{f}|{}_{\rm bd}}-\tau(f,x,\xi,|p(x)|)$ must be small too.
In view of the definition \eqref{taux}, the last condition means that the norms of subgradients of $f$ computed at the points within the set $S_f$ allowed by this definition cannot be much smaller than ${|\partial{f}|{}_{\rm bd}}$.
\xqed\end{remark}}

%In principle, $g\in $\textrm{Ptb}$(f,\varepsilon )$ does not entail the convexity of $g.$
In view \eqref{30}, condition \eqref{Io2} implies $\xi \leq
\varepsilon $.
Conditions \eqref{Io2} and \eqref{Io} are obviously satisfied with $g\equiv f$, any $x\in{S}_f^=$ and $\xi=0$.
Hence, $f\in\Ptb(f,\varepsilon)$ for any $\eps\ge0$.
If $\tau(f,x,\eps,\de)={|\partial{f}|{}_{\rm bd}}$ for some $x\in{S}_f^=$ and all $\de\ge0$ (for instance, if $0\in\sd f(x)$), then any function $p$ satisfying condition \eqref{Io} with $\xi=\eps$ defines an $\eps$-perturbation $f+p$ of $f$.
\sloppy

Function $p$ in \eqref{Io} enjoys some nice properties.
The next lemma is unlikely to be new, cf. e.g., \cite[Exercise~4.1.28]{BorVan10}.
We provide the proof for completeness.
\begin{lemma}\label{L1}
Suppose $p:X\to\R$ is convex and satisfies condition \eqref{Io} with some $\xi\ge0$.
Then $p$ is Lipschitz continuous with constant $\xi$ and, for any $u\in X$ and any $x^*\in\sd p(u)$, it holds $\|x^*\|\le\xi$.
\end{lemma}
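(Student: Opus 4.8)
The plan is to establish the two assertions in order: first promote the single-anchor calmness \eqref{Io} to genuine global Lipschitz continuity of $p$ with the same constant $\xi$, and then deduce the bound $\|x^*\|\le\xi$ on subgradients as an immediate consequence. Since $p:X\to\R$ is finite-valued and convex, all the convex-combination manipulations below are unproblematic, and the entire difficulty is concentrated in the first step; the passage from Lipschitz continuity to the subgradient bound is standard.

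For the Lipschitz estimate I fix two distinct points $u,v\in X$ and aim to show $p(v)-p(u)\le\xi\|v-u\|$ (the reverse inequality then follows by interchanging $u$ and $v$). The key device is to run the ray from $u$ through $v$ out to infinity and exploit convexity. For $T>1$ put $z_T:=u+T(v-u)$; then $v$ is the convex combination $v=(1-\tfrac1T)u+\tfrac1T z_T$, so convexity of $p$ gives $p(v)-p(u)\le\frac1T\bigl(p(z_T)-p(u)\bigr)$. I then bound the right-hand side using \eqref{Io} twice together with the triangle inequality $\|z_T-x\|\le T\|v-u\|+\|u-x\|$: indeed $p(z_T)-p(u)=[p(z_T)-p(x)]-[p(u)-p(x)]\le\xi\|z_T-x\|+\xi\|u-x\|\le\xi T\|v-u\|+2\xi\|u-x\|$. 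Substituting yields $p(v)-p(u)\le\xi\|v-u\|+\frac{2\xi\|u-x\|}{T}$, and letting $T\to\infty$ removes the anchor term and gives exactly $p(v)-p(u)\le\xi\|v-u\|$, whence $|p(u)-p(v)|\le\xi\|u-v\|$ after symmetrizing.

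Once Lipschitz continuity is in hand, the subgradient bound is routine: for $u\in X$ and $x^*\in\sd p(u)$, the subgradient inequality combined with the Lipschitz estimate gives $\langle x^*,v-u\rangle\le p(v)-p(u)\le\xi\|v-u\|$ for every $v\in X$; testing with $v=u+w$ over unit vectors $w$ and taking the supremum produces $\|x^*\|\le\xi$. The only genuinely non-trivial point---the main obstacle---is thus the first step: extracting a \emph{global} Lipschitz constant from calmness at the \emph{single} point $x$. It is precisely the convexity of $p$ that makes this possible, through the ``push the far endpoint to infinity'' trick above, in which the error term $2\xi\|u-x\|/T$ vanishes in the limit; the argument would break down for a merely calm, non-convex $p$.
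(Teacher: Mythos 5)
Your proof is correct and follows essentially the same route as the paper's: both express one point as a convex combination of the other and a point pushed to infinity along the connecting ray, use the calmness condition \eqref{Io} at the anchor $x$ to bound the value at the far point, and let the parameter tend to infinity so that the anchor-dependent error term vanishes; the subgradient bound is then derived in the same standard way. The only cosmetic difference is that you bound the difference $p(z_T)-p(u)$ directly via two applications of \eqref{Io}, whereas the paper bounds $p$ at the far point and absorbs the constants before passing to the limit.
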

\begin{proof}
Let $u_1,u_2\in X$, $u_1\ne u_2$.
Denote $\la:=\|u_1-u_2\|$ and take an arbitrary $t>0$.
Since,
$$
u_1=\frac{t}{t+\la}u_2+ \frac{\la}{t+\la}\left(u_1+\frac{t}{\la}(u_1-u_2)\right),
$$
using the convexity of $p$ and condition \eqref{Io}, we obtain
\begin{eqnarray*}
p(u_{1}) &\leq &\frac{t}{t+\lambda}p(u_{2})+\frac{\lambda}{t+\lambda}
p\left(u_{1}+\frac{t}{\lambda }(u_{1}-u_{2})\right)  \\
&\leq &\frac{t}{t+\lambda}p(u_{2})+\frac{\lambda}{t+\lambda} \left(\xi\left\Vert u_{1}+\frac{t}{\lambda}(u_{1}-u_{2})-x\right\Vert +p(x)\right)\\
&\leq &\frac{t}{t+\lambda}p(u_{2})+\frac{\lambda}{t+\lambda} \left(\xi\left(\Vert u_{1}\Vert +t+\|x\|\right) +p(x)\right)\\
&=&\frac{t}{t+\lambda}p(u_{2})+\frac{\lambda}{t+\lambda} \left(\xi(\Vert u_{1}\Vert+\|x\|)+p(x)\right) +\frac{\lambda\xi t}{t+\lambda},
\end{eqnarray*}
and after passing to the limit as $t\to\infty$, we conclude that $p(u_1)\le p(u_2)+\la\xi$.
The same inequality obviously holds true with $u_1$ and $u_2$ reversed.
Hence,
$$|p(u_1)-p(u_2)|\le\la\xi=\xi\|u_1-u_2\|,$$
and consequently, $p$ is Lipschitz continuous with constant $\xi$.
If $u\in X$ and $x^*\in\sd p(u)$, then
$$
\ang{x^*,x}\le p(u+x)-p(u)\le\xi\|x\|
$$
for any $x\in X$.
Hence, $\|x^*\|\le\xi$.
\qed\end{proof}

Observe that the above ``global'' definition of an $\varepsilon$-per\-turbation contains a local element: it requires the existence of a point $x\in{S}_f^=$, and the perturbation is defined by \eqref{Io2} and \eqref{Io} relative to this point and depends on the number $\xi$.
If $\xi =0$ in \eqref{Io}, then
%$g(u)-f(u)=g(x)-f(x)$
{$p(u)=p(x)$}
for all $u\in
%\dom f
{X}.$

Condition \eqref{Io2} imposes a strong restriction on the choice of such a point and a number: for all points $u\in X$ with $f(u)\ge-\xi\|u-x\|-|
%g(x)-f(x)
{p(x)}
|$, the norms of subgradients of $f$ must not be much smaller than ${|\partial{f}|{}_{\rm bd}}$.
This is also a restriction on the class of functions which admit nontrivial $\varepsilon$-per\-turbations.
%For instance, if $f(x)=\left\vert x\right\vert $ ($x\in\R$), then \textrm{Ptb}$(f,\varepsilon )=\emptyset$ for all $\varepsilon\in[0,1[$.

\begin{proposition}\label{in}
Let $f\in\Gamma_0(X)$ and $S_f^=\neq\emptyset$.
If there exist sequences $\{x_k\}\subset X$ and $\{x_k^*\}\subset X^*$ such that $x_k^*\in\sd f(x_k)$ $(k=1,2,\ldots)$, $\|x_k\|\to\infty$, $\|x_k^*\|\to0$ as $k\to\infty$ and the sequence $\{f(x_k)\}$ is bounded below, then, for any $\eps\in[0,{|\partial{f}|{}_{\rm bd}}]$ and $g\in\Ptb(f,\varepsilon)$, it holds $g-f={\rm const}$.
\end{proposition}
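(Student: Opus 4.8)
The plan is to reduce everything to showing that the calmness constant $\xi$ attached to $g$ must vanish. Recall that $g=f+p\in\Ptb(f,\varepsilon)$ comes equipped with a witness point $x\in S_f^=$ and a number $\xi\ge0$ satisfying \eqref{Io2} and \eqref{Io}. Since $\tau(f,x,\xi,|p(x)|)\le{|\partial{f}|{}_{\rm bd}}$ by \eqref{30}, condition \eqref{Io2} combined with the standing assumption $\varepsilon\le{|\partial{f}|{}_{\rm bd}}$ gives, after cancelling ${|\partial{f}|{}_{\rm bd}}$, the inequality $\xi\le\tau(f,x,\xi,|p(x)|)$. If I can show $\xi=0$, then \eqref{Io} forces $p(u)=p(x)$ for all $u\in X$, i.e. $g-f=p$ is constant, which is the assertion. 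So the whole argument is a proof by contradiction against $\xi>0$.

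Before running that argument I would rule out the degenerate branch of the definition \eqref{taux}. I claim the hypothesis forces $0\notin\Int\partial f(x)$ at the witness point (in fact at every point of $S_f^=$). Indeed, if $0\in\Int\partial f(x)$, choose $r>0$ with $r\B^*\subseteq\partial f(x)$; taking the supremum of the subgradient inequality over this ball yields $f(u)\ge r\|u-x\|$ for all $u$. Then for any $u\ne x$ and any $u^*\in\partial f(u)$ one has $\langle u^*,u-x\rangle\ge f(u)-f(x)\ge r\|u-x\|$, so $\|u^*\|\ge r$. Since $\|x_k\|\to\infty$ forces $x_k\ne x$ eventually, this contradicts $\|x_k^*\|\to0$. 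Hence we are always in the first case of \eqref{taux}, namely $\tau(f,x,\xi,|p(x)|)=\inf\{\,d(0,\partial f(u))\mid f(u)\ge-\xi\|u-x\|-|p(x)|\,\}$.

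The heart of the proof is to feed the unbounded sequence into this infimum. Suppose $\xi>0$. Because $\{f(x_k)\}$ is bounded below, say $f(x_k)\ge M$, whereas $-\xi\|x_k-x\|-|p(x)|\to-\infty$ (using $\xi>0$, the finiteness of $p(x)$, and $\|x_k-x\|\to\infty$), for all large $k$ the point $x_k$ satisfies the defining constraint $f(x_k)\ge-\xi\|x_k-x\|-|p(x)|$. Consequently $d(0,\partial f(x_k))\ge\tau(f,x,\xi,|p(x)|)\ge\xi>0$ for large $k$. But $x_k^*\in\partial f(x_k)$ gives $d(0,\partial f(x_k))\le\|x_k^*\|\to0$, a contradiction. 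Therefore $\xi=0$, and as noted above $g-f=\mathrm{const}$.

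The main obstacle I anticipate is the bookkeeping over which branch of $\tau$ is active: in the case $0\in\Int\partial f(x)$ the localized constant ignores the constraint set entirely, and the unbounded-sequence argument would not bite. The observation that $0\in\Int\partial f(x)$ forces at-least-linear growth of $f$, and hence subgradients uniformly bounded away from $0$ off the point $x$, is precisely what closes this gap; it is where the conditions $\|x_k\|\to\infty$ and $\|x_k^*\|\to0$ are used. The boundedness-below hypothesis then does the remaining work in the main contradiction, where it is what keeps $x_k$ inside the feasible set of the infimum defining $\tau$ once $\xi>0$.
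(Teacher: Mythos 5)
Your proof is correct and follows essentially the same route as the paper's: assume $\xi>0$, use the boundedness below of $\{f(x_k)\}$ and $\|x_k-x\|\to\infty$ to place $x_k$ in the constraint set defining $\tau(f,x,\xi,|p(x)|)$, and derive a contradiction with \eqref{Io2} and $\varepsilon\le{|\partial{f}|{}_{\rm bd}}$ from $\|x_k^*\|\to0$. Your explicit verification that the hypotheses rule out the branch $0\in\Int\partial f(x)$ of \eqref{taux} is a detail the paper leaves implicit, but it does not change the argument.
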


\begin{proof}
Let $\eps\in[0,{|\partial{f}|{}_{\rm bd}}]$, $g\in\Ptb(f,\varepsilon)$, and sequences $\{x_k\}\subset X$ and $\{x_k^*\}\subset X^*$ satisfy the conditions of the proposition.
Since $g\in\Ptb(f,\varepsilon)$, there are a convex function $p$, an $x\in S_f^=$ and a $\xi\ge0$ satisfying \eqref{Io2} and \eqref{Io}.
If $\xi>0$, then, because $\{f(x_k)\}$ is bounded below, it holds $f(x_k)\ge-\xi\|x_k-x\|-|
%g(x)-f(x)
{p(x)}
|$ for all sufficiently large $k$, and consequently, $\tau(f,x,\xi,|
%g(x)-f(x)
{p(x)}
|)=0$.
This contradicts \eqref{Io2} in view of the assumption that $\eps\le{|\partial{f}|{}_{\rm bd}}$.
Hence, $\xi=0$, and it follows from \eqref{Io} that %$g(u)-{f}(u)=g(x)-f(x)$
{$p(u)=p(x)$}
for all ${u}\in
%\dom f
{X}$.
%If ${u}\notin\dom f=\dom g$, then $g(u)={f}(u)=\infty$ and the same equality holds true.
\qed\end{proof}

\begin{remark}
The conclusion of Proposition~\ref{in} remains valid if condition $\|x_k^*\|\to0$ as $k\to\infty$ is replaced by a weaker one: $\lim_{k\to\infty}\|x_k^*\|\le{|\partial{f}|{}_{\rm bd}}-\eps$.
The assumption that the sequence $\{f(x_k)\}$ is bounded below can be relaxed too: it is sufficient to assume that $\min\{f(x_k),0\}/\|x_k\|\to0$ as $k\to\infty$.
\xqed\end{remark}

The conditions of Proposition~\ref{in} are satisfied, for instance, when the set $S:={\rm argmin}\,f$ is nonempty and unbounded.
Then, obviously, $0\in\sd f(x)$ for all $x\in S$, and any sequence $\{x_k\}\subset S$ with $\|x_k\|\to\infty$ as $k\to\infty$ will do the job.
Below are some specific examples.

\begin{example}\label{gl*}
1. $f(x)\equiv0$, $x\in\R$ (cf. Example~\ref{gl}.1).
Then $S_f=S_f^=={\rm argmin}\,f=\R$,
{$\tau(f,x,\eps,\de)={|\partial{f}|{}_{\rm bd}}=0$ for all $x\in\R$, $\eps\ge0$, $\de\ge0$.}
Thus, $\Ptb(f,0)=\{g\equiv c\mid c\le0\}$.
{If $\varepsilon >0$, then $g\in $
\textrm{Ptb}$(f,\varepsilon )$ if and only if $S_g\neq\emptyset$ and
\[
\left\vert g(u)-g(x)\right\vert \leq \eps\left\vert u-x\right\vert\quad\text{for some }x\in\R \text{ and all }u\in \mathbb{R}\text{.}
\]}

2. $f(x):=\max \{x,-1\}$, $x\in\R$.
Then $S_f=]-\infty,0]$, $S_f^==\{0\}$, ${\rm argmin}\,f=]-\infty,-1]$,
${|\partial{f}|{}_{\rm bd}}=1$, and
{
$$
\tau(f,0,\xi,\de)=
\begin{cases}
0 & \mbox{if }\xi>0,\de\ge0 \mbox{ or } \xi=0,\de\ge1,
\\
1 & \mbox{if }\xi=0,0\le\de<1.
\end{cases}
$$
When $0\le\eps<1$, then condition \eqref{Io2} with $x=0$ is satisfied only when $\xi=0$ and $|g(0)|<1$.
Hence,
$\Ptb(f,\eps)=\{f+c\mid |c|<1\}$.
}

%3. $f(x):=\min \{x,1\}$, $x\in\R$.
%Then $S_f=]-\infty,0]$, $S_f^==\{0\}$,
%${|\partial{f}|{}_{\rm bd}}=1$ and, in accordance with the adopted definition of $\eps$-perturbations, $\Ptb(f,\eps)=\{f+c\mid -\infty<c<\infty\}$ for any $\eps\in[0,1]$.
%Observe that in this example ${\rm argmin}\,f=\emptyset$ while the horizontal part of the graph corresponding to $x\ge1$ and providing points satisfying the assumptions of  Proposition~\ref{in} is not related to the minimum of the function.
% AK 10/10/2015
% I have removed this example, just in case.

%In this example, it might be reasonable to think of other perturbations, e.g., ``rotating'' the graph of $f$ counterclockwise.
%For that, some adjustments in the definition are required; cf. Remark~?
%\todo[inline]{To be discussed further.}

3. $f:\mathbb{R\rightarrow}\mathbb{R}$ is
defined by
\[
f(x):=\left\{
\begin{array}{ll}
-2x & \mbox{if }x\leq 0, \\
-x & \mbox{if }x\in ]0,1], \\
-1-\frac{1}{2}(x-1) & \mbox{if }x\in ]1,2], \\
... & ... \\
-1-\frac{1}{2}-...-\frac{1}{2^{n-1}}-\frac{1}{2^{n}}(x-n) & \mbox{if }%
x\in ]n,n+1], \\
... & ...%
\end{array}%
\right.
\]%
Then $S_f=[0,\infty[$, $S_f^==\{0\}$,
${|\partial{f}|{}_{\rm bd}}=1$, and $\inf f=-2,$ %$\Ptb(f,\eps)=\{f+c\mid -\infty<c<2\}$ for any $\eps\in[0,1]$.
Observe that in this example ${\rm argmin}\,f=\emptyset$ and the graph has no horizontal parts.
Nevertheless, it is easy to construct sequences satisfying the assumptions of Proposition~\ref{in}.
{To determine the exact representation of the collection of $\eps$-perturbations, similar to the previous example, $\tau(f,0,\xi,\de)$ needs to be computed:
\[
\tau(f,0,\xi,\de)=\left\{
\begin{array}{ll}
0 & \mbox{if }\xi>0,\de\ge0 \mbox{ or } \xi=0,\de\ge2,\\
1 & \mbox{if }\xi=0,0\le\de<1,\\
\frac{1}{2} & \mbox{if }\xi=0,1\le\de<\frac{3}{2},\\
... & ... \\
\frac{1}{2^{n}} & \mbox{if }\xi=0,2-\frac{1}{2^{n-1}}\le\de<2-\frac{1}{2^{n}},\\
... & ...%
\end{array}%
\right.
\]%
When $0\le\eps<1$, then condition \eqref{Io2} with $x=0$ is satisfied only when $\xi=0$ and $\tau(f,0,0,|g(0)|)\ge1-\eps$.
In particular, if $0\le\eps<1/2$, then it must hold $|g(0)|<1$.
Hence, in this case,
$\Ptb(f,\eps)=\{f+c\mid |c|<1\}$.
}
\xqed\end{example}

In \cite{KruNgaThe10}, when investigating stability of global error bounds, a special \emph{asymptotic qualification condition} $(\mathcal{AQC})$ (generalizing the corresponding finite dimensional condition from \cite{NgaKruThe10}) was imposed on function $f$:
\begin{enumerate}
\item [$(\mathcal{AQC})$]
$\liminf_{k\to\infty}\|x^*_k\|\ge{|\partial{f}|{}_{\rm bd}}$ for any sequences
$x_k\in{X}$ with $f(x_k)<0$ and $x^*_k\in\partial{f}(x_k)$,
$k=1,2,\ldots$, satisfying the following:
\begin{enumerate}
\item
either the sequence $\{x_k\}$ is bounded and
$\lim_{k\to\infty}f(x_k)=0$,
\item
or $\lim_{k\to\infty}\|x_k\|=\infty$ and
$\lim_{k\to\infty}f(x_k)/\|x_k\|=0$.
\end{enumerate}
\end{enumerate}
If $\inf_{x\in X}f(x)<0$, then,
under this condition,
thanks to \cite[Proposition~23]{KruNgaThe10},
for any point $x\in{S}_f^=$ and any $\eps>0$, one can find a positive number $\xi$ such that condition \eqref{Io2} is satisfied as long as $|
%g(x)-f(x)
{p(x)}
|\le\xi$.
Due to this observation and Lemma~\ref{L1}, under the $(\mathcal{AQC})$ the collection $\Ptb(f,\varepsilon)$ of $\varepsilon$-perturbations defined above is larger than the corresponding one considered in \cite{KruNgaThe10}.

The next example illustrates the computation of $\eps$-perturbations.
\begin{example}
Consider again the piecewise linear function $f:\mathbb{R\rightarrow }\mathbb{R}$ examined in Example~\ref{E13}.
We target small perturbations of this function related to the point $x=1\in{S}_f^=$.
Let $\eps\in[0,1[$.
Since ${|\partial{f}|{}_{\rm bd}}=1$ and $\tau(f,1,\eps,\de)$ equals either 0 or 1 (cf. \eqref{tau1}), condition \eqref{Io2} can only be satisfied when $\tau(f,1,\xi,|
%g
{p}
(1)|)=1$, i.e., when $0\le\xi\le\eps$ and $\xi+|
%g
{p}
(1)|<1$.
Condition \eqref{Io} takes the following form:
\begin{gather*}
|
%g(u)-{f}(u)-g(1)
{p(u)-p(1)}
|\le\xi|u-1| \quad\mbox{for all}\quad {u}\in\R.
\end{gather*}
In particular, when $\eps=0$, all 0-perturbations are of the form $f+c$ with $|c|<1$.
\xqed\end{example}

If condition \eqref{Io2} in the definition of $\varepsilon$-perturbation is dropped we will talk about weak $\varepsilon$-perturbations:
$g:X\to\R_\infty$ is a \emph{weak $\varepsilon$-per\-turbation} of $f$
if
%$\dom g=\dom f$,
$S_g\neq\emptyset$,
{$g=f+p$ where $p:X\to\R$ is convex},
and there exists a point $x\in{S}_f^=$ such that
\begin{gather*}\label{Io3}
%|g(u)-{f}(u)-g(x)+f(x)|
{|p(u)-p(x)|}
\le\eps\|u-x\| \quad\mbox{for all}\quad {u}\in
%\dom f.
{X}.
\end{gather*}
In this case, one obviously has $g-p(x)\in\Ptb_c(f,x,\varepsilon)$.
The collection of all weak $\varepsilon$-perturbations of $f$ will be denoted $\Ptb^w(f,\varepsilon)$.
Obviously $\Ptb(f,\varepsilon)\subset\Ptb^w(f,\varepsilon)$.

The following subsets of $\Ptb(f,\varepsilon)$
and $\Ptb^w(f,\varepsilon)$
corresponding to linear $\varepsilon$-per\-turbations of $f$ can be of interest:
\begin{align}\notag
\Ptb_l(f,\varepsilon):=& \{g\mid g(u)-f(u)=\ang{x^*,u-x} (u\in X),
\\\notag
&\qquad x\in{S}_f^=,\; \xi\ge0,\; x^*\in\xi\B^*,\;
\xi+{|\partial{f}|{}_{\rm bd}}-\tau(f,x,\xi,0)\le\eps\},
\\\notag
{\Ptb^w_l(f,\varepsilon):=}&
{\{g\mid g(u)-f(u)=\ang{x^*,u-x} (u\in X),\;
x\in{S}_f^=,\; x^*\in\eps\B^*\}.}
\end{align}
Obviously,
\begin{gather}\label{Rep}
\Ptb_l(f,\varepsilon) \subset
\Ptb(f,\varepsilon),
\quad
\Ptb^w_l(f,\varepsilon) \subset
\Ptb^w(f,\varepsilon),
\\\label{Rep2}
\Ptb^w_l(f,\varepsilon)= \bigcup_{x\in{S}_f^=}\Ptb_l(f,x,\varepsilon).
\end{gather}

Given a function $f\in\Gamma_0(X)$ and a number $\eps\ge0$, denote
\begin{align*}
\Er\{\Ptb(f,\varepsilon)\}:=&
\inf_{g\in\Ptb(f,\varepsilon)}\Er{g},
\\
\Er\{\Ptb_l(f,\varepsilon)\}:=&
\inf_{g\in\Ptb_l(f,\varepsilon)}\Er{g},
\\
{\Er\{\Ptb^w(f,\varepsilon)\}:=}&
{\inf_{g\in\Ptb^w(f,\varepsilon)}\Er{g},}
\\
{\Er\{\Ptb^w_l(f,\varepsilon)\}:=}&
{\inf_{g\in\Ptb^w_l(f,\varepsilon)}\Er{g}.}
\end{align*}
These numbers characterize the error bound property for families of $\eps$-per\-tur\-bations of $f$.
Thanks to \eqref{Rep} and \eqref{Rep2},
it holds for all $\eps\ge0$:
\begin{gather}\label{ert2}
\Er\{\Ptb(f,\varepsilon)\} \le\Er\{\Ptb_l(f,\varepsilon)\}
\le\Er{f},
\\\label{ert22}
{\Er\{\Ptb^w(f,\varepsilon)\} \le\Er\{\Ptb^w_l(f,\varepsilon)\}}= \inf_{x\in{S}_f^=}\Er\{\Ptb_l(f,x,\varepsilon)\}(x).
\end{gather}

\begin{theorem}\label{Ge}
Let $f\in\Gamma_0(X)$, $S_f\ne\emptyset$.
Then
\begin{align}\notag
&\inf\{\eps>0\mid \Er\{\Ptb^w(f,\varepsilon)\}=0\}
\\\notag
\le&\inf\{\eps>0\mid \Er\{\Ptb^w_l(f,\varepsilon)\}=0\}\le{|\partial{f}|{}_{\rm bd}}
\hspace{-.3cm}
&\le
\inf\{\eps>0\mid \Er\{\Ptb(f,\varepsilon)\}=0\}
\\\label{Ge-0}
&&\le\inf\{\eps>0\mid \Er\{\Ptb_l(f,\varepsilon)\}=0\}.
\hspace{-.2cm}
\end{align}
\end{theorem}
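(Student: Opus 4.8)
The four inequalities split into two elementary monotonicity relations and two that carry the real content. For the outermost two, I would simply exploit the nesting of the perturbation families recorded in \eqref{Rep}. Since $\Ptb^w_l(f,\varepsilon)\subset\Ptb^w(f,\varepsilon)$, taking the infimum of $\Er g$ over the larger family gives $\Er\{\Ptb^w(f,\varepsilon)\}\le\Er\{\Ptb^w_l(f,\varepsilon)\}$, as already noted in \eqref{ert22}; hence $\{\eps>0\mid\Er\{\Ptb^w_l(f,\varepsilon)\}=0\}\subset\{\eps>0\mid\Er\{\Ptb^w(f,\varepsilon)\}=0\}$, and comparing infima over these nested sets yields the first inequality. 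The last inequality follows in exactly the same way from $\Ptb_l(f,\varepsilon)\subset\Ptb(f,\varepsilon)$ and \eqref{ert2}.

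For the inequality $\inf\{\eps>0\mid\Er\{\Ptb^w_l(f,\varepsilon)\}=0\}\le{|\partial{f}|{}_{\rm bd}}$ I would reduce to the local result, Theorem~\ref{ge}. Fix an arbitrary $\eps>{|\partial{f}|{}_{\rm bd}}$. By \eqref{auf} we have ${|\partial{f}|{}_{\rm bd}}=\inf_{x\in S_f^=}{|\partial{f}|{}_{\rm bd}(x)}$, so there is a point $x\in S_f^=$ with ${|\partial{f}|{}_{\rm bd}(x)}<\eps$. Applying Theorem~\ref{ge} at this $x$ (its hypotheses hold, since $f(x)=0$) and using that $\eps\mapsto\Er\{\Ptb_l(f,x,\varepsilon)\}(x)$ is nonincreasing together with the nesting of linear perturbation balls, I conclude $\Er\{\Ptb_l(f,x,\varepsilon)\}(x)=0$. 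The identity in \eqref{ert22} then gives $\Er\{\Ptb^w_l(f,\varepsilon)\}\le\Er\{\Ptb_l(f,x,\varepsilon)\}(x)=0$. As $\eps>{|\partial{f}|{}_{\rm bd}}$ was arbitrary, passing to the infimum proves the claim.

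The substantive step is the stability inequality ${|\partial{f}|{}_{\rm bd}}\le\inf\{\eps>0\mid\Er\{\Ptb(f,\varepsilon)\}=0\}$, and here I would prove the sharper quantitative estimate $\Er\{\Ptb(f,\varepsilon)\}\ge{|\partial{f}|{}_{\rm bd}}-\eps$ valid for every $\eps\ge0$. Take any $g=f+p\in\Ptb(f,\varepsilon)$ with associated data $x\in S_f^=$ and $\xi\ge0$ from \eqref{Io2} and \eqref{Io}, and work in the principal case $0\notin\Int\sd f(x)$. For every $u$ with $g(u)>0$, the calmness bound \eqref{Io} gives $f(u)=g(u)-p(u)>-p(u)\ge-|p(x)|-\xi\|u-x\|$, so $u$ belongs to the set over which $\tau(f,x,\xi,|p(x)|)$ is the infimum; by definition \eqref{taux} this forces $d(0,\sd f(u))\ge\tau(f,x,\xi,|p(x)|)$. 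Now I invoke the convex sum rule $\sd g(u)=\sd f(u)+\sd p(u)$ (legitimate since $p$ is convex and, by Lemma~\ref{L1}, Lipschitz, hence continuous) and the bound $\|y^*\|\le\xi$ for $y^*\in\sd p(u)$ from Lemma~\ref{L1}, to get $d(0,\sd g(u))\ge d(0,\sd f(u))-\xi\ge\tau(f,x,\xi,|p(x)|)-\xi$. Rearranging \eqref{Io2} as $\tau(f,x,\xi,|p(x)|)\ge\xi+{|\partial{f}|{}_{\rm bd}}-\eps$ then yields $d(0,\sd g(u))\ge{|\partial{f}|{}_{\rm bd}}-\eps$ uniformly in such $u$, i.e.\ ${|\partial{g}|{}^>}\ge{|\partial{f}|{}_{\rm bd}}-\eps$. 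Theorem~\ref{li} together with \eqref{li-1} then delivers $\Er g={|\partial{g}|{}^>}\ge{|\partial{f}|{}_{\rm bd}}-\eps$, and taking the infimum over $g\in\Ptb(f,\varepsilon)$ gives the estimate; in particular $\Er\{\Ptb(f,\varepsilon)\}=0$ forces $\eps\ge{|\partial{f}|{}_{\rm bd}}$.

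The main obstacle I anticipate is the bookkeeping in this last step: correctly threading the definition \eqref{taux} through the chain of inequalities, and disposing of the degenerate alternative $0\in\Int\sd f(x)$. In that exceptional case $S_f=S_f^==\{x\}$ and $\tau(f,x,\xi,|p(x)|)=d(0,\bd\sd f(x))={|\partial{f}|{}_{\rm bd}}$ independently of $\xi$, so \eqref{Io2} reduces to $\xi\le\eps$ and the problem localizes to the single solution point; I would close it by the local criterion (ii) of Theorem~\ref{lo} rather than Theorem~\ref{li}, treating it separately and briefly. A minor point worth checking is the case $u\notin\dom f$, which is harmless since then $\sd g(u)=\emptyset$ and $d(0,\sd g(u))=+\infty$, so the uniform lower bound is trivially met.
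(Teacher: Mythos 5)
Your treatment of three of the four inequalities coincides with the paper's: the outer two follow from the nestings \eqref{Rep} and the monotonicity relations \eqref{ert2}, \eqref{ert22}, and the bound $\inf\{\eps>0\mid \Er\{\Ptb^w_l(f,\varepsilon)\}=0\}\le{|\partial{f}|{}_{\rm bd}}$ is obtained exactly as in the paper by combining \eqref{auf}, Theorem~\ref{ge} and the equality in \eqref{ert22}. Your argument for the substantive estimate $\Er\{\Ptb(f,\varepsilon)\}\ge{|\partial{f}|{}_{\rm bd}}-\eps$ in the case $0\notin\Int\sd f(x)$ for all $x\in S_f^=$ is also the paper's: $g(u)>0$ forces $f(u)\ge-\xi\|u-x\|-|p(x)|$, hence $d(0,\sd f(u))\ge\tau(f,x,\xi,|p(x)|)$, and the sum rule together with Lemma~\ref{L1} and \eqref{Io2} gives $d(0,\sd g(u))\ge{|\partial{f}|{}_{\rm bd}}-\eps$, whence the claim via \eqref{li-1}.

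The genuine gap is the case $0\in\Int\sd f(\bx)$ for some $\bx\in S_f^=$, which you dispose of in one sentence. Your proposed tool, criterion (ii) of Theorem~\ref{lo} applied to $g$ at $\bx$, does not close this case as stated, for two reasons. First, Theorem~\ref{lo} is formulated for a function vanishing at the reference point, whereas $g(\bx)=p(\bx)$ is unconstrained here (in this case \eqref{Io2} indeed reduces to $\xi\le\eps$ and puts no restriction on $|p(\bx)|$); and even granting a local error bound for $g$ at $\bx$, that controls only $\Er g(\bx)$, not the global modulus $\Er g$ that enters $\Er\{\Ptb(f,\varepsilon)\}$, for which a lower bound uniform over the whole family is required. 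Second, and more importantly, you give no argument for why $\sd g(\bx)$, or $\bd\sd g(\bx)$, stays away from the origin. The paper's proof does real work here: from the one-sided estimate \eqref{Io-} one derives the inclusion $\sd f(\bx)\subset\sd g(\bx)+\xi\B^*$, hence ${|\partial{f}|{}_{\rm bd}}\B^*\subset\sd g(\bx)+\eps\B^*$, and the R\aa dstr\"{o}m cancellation principle then yields $({|\partial{f}|{}_{\rm bd}}-\eps)\B^*\subset\sd g(\bx)$; this shows $\bx\in S_g$ and, by monotonicity of the subdifferential, that $\|x^*\|\ge{|\partial{f}|{}_{\rm bd}}-\eps$ for every $x^*\in\sd g(u)$ with $u\ne\bx$, which is what feeds into \eqref{li-1}. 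Without the cancellation step (or an equivalent argument) the degenerate case is not covered, so this part of your proof needs to be supplied rather than sketched.
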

\begin{proof}
Thanks to \eqref{ert2}
{and \eqref{ert22}},
we always have
\begin{align}\label{Ge.07}
\inf\{\eps>0\mid \Er\{\Ptb(f,\varepsilon)\}=0\}
\le&\inf\{\eps>0\mid \Er\{\Ptb_l(f,\varepsilon)\}=0\},
\\\label{Ge.07+}
\inf\{\eps>0\mid \Er\{\Ptb^w(f,\varepsilon)\}=0\}
\le&
\inf\{\eps>0\mid \Er\{\Ptb^w_l(f,\varepsilon)\}=0\}.
\end{align}
We first show that
\begin{gather}\label{Ge-2}
{|\partial{f}|{}_{\rm bd}} \le\inf\{\eps>0\mid\Er\{\Ptb(f,\varepsilon)\}=0\}.
\end{gather}
{If ${|\partial{f}|{}_{\rm bd}}=0$, the inequality holds true trivially.}
Let $0<\eps<{|\partial{f}|{}_{\rm bd}}$ and $g\in\Ptb(f,\varepsilon)$,
i.e. conditions \eqref{Io2} and \eqref{Io} are satisfied for some
{convex function $p:X\to\R$, a}
point $x\in{S}_f^=$ and a number $\xi\ge0$.
Two cases are possible.

\underline{If $0\in\Int\partial{f}(\bar{x})$
for some $\bar{x}\in{S}_f^=$}, then
${S}_f={S}_f^==\{\bar{x}\}$,
$x=\bx$,
${|\partial{f}|{}_{\rm bd}}={|\partial{f}|{}_{\rm bd}(\bar{x})}$, and
\begin{gather}\label{Ge-1-}
{|\partial{f}|{}_{\rm bd}}\B^*\subset\sd f(\bar{x}).
\end{gather}
We need to show that
\begin{gather}\label{Ge-1}
\sd f(\bar{x})\subset\sd g(\bar{x})+\xi\B^*.
\end{gather}
Let $x^*\in\partial{f}(\bar{x})$, i.e.,
\begin{gather*}
f(u)-f(\bar{x})-\langle{x}^*,u-\bar{x}\rangle\ge0
\quad\mbox{for all}\quad u\in X.
\end{gather*}
At the same time, by \eqref{Io},
\begin{gather}\label{Io-}
%g(u)-{f}(u)-g(\bx)+f(\bx)
{p(u)-p(\bx)}
\ge-\xi\|u-\bx\| \quad\mbox{for all}\quad {u}\in
%\dom f.
{X}.
\end{gather}
Adding the last two inequalities,
%and taking into account that $\dom g=\dom f$,
we obtain
\begin{gather*}
g(u)-g(\bar{x})+\xi\|u-\bx\|- \langle{x}^*,u-\bar{x}\rangle\ge0
\quad\mbox{for all}\quad u\in X,
\end{gather*}
i.e., $x^*$ is a subgradient at $\bx$ of the sum of two convex functions: $g$ and $u\mapsto\varphi(u):=\xi\|u-\bx\|$.
Function $\varphi$ is Lipschitz continuous and $\sd\varphi(\bx)=\xi{\B}^*$.
Hence, $x^*\in\partial{g}(\bar{x})+\xi{\B}^*$, which proves \eqref{Ge-1}.
Since $\xi\le\eps$, it follows from \eqref{Ge-1-} and \eqref{Ge-1} that
\begin{gather*}\label{Ge-1+}
{|\partial{f}|{}_{\rm bd}}\B^*\subset\sd g(\bar{x})+\eps\B^*.
\end{gather*}
Then, thanks to the \emph{R\aa dstr\"{o}m cancellation principle}
{(cf. Beer \cite[7.4.1]{Bee93}),}
\begin{gather*}\label{Ge-1++}
({|\partial{f}|{}_{\rm bd}}-\eps)\B^*\subset\sd g(\bar{x}),
\end{gather*}
and consequently, $\bx$ is a minimum point of $g$.
Since $S_g\ne\emptyset$, we have $\bx\in S_g$.
We next show that $\|x^*\|\ge{|\partial{f}|{}_{\rm bd}}-\eps$ as long as $x^*\in\sd g(u)$ for some $u\ne\bx$.
Indeed,
for any $u\ne\bx$, $x^*\in\sd g(u)$ and $u^*\in\sd g(\bx)$, one has
\begin{gather*}\label{Ge-3'}
g(u)\geq g(\bx)+\left\langle u^{\ast},u-\bx\right\rangle,
\quad
g(\bx)\geq g(u)+\left\langle x^{\ast},\bx-u\right\rangle.
\end{gather*}
Hence,
\begin{gather*}
\left\langle x^{\ast},u-\bx\right\rangle\ge\langle u^{\ast},u-\bx\rangle,
\end{gather*}
and consequently, taking supremum in the \RHS\ over all $u^*\in({|\partial{f}|{}_{\rm bd}}-\eps)\B^*$,
\begin{gather*}
\left\langle x^{\ast},u-\bx\right\rangle\ge({|\partial{f}|{}_{\rm bd}}-\eps) \|u-\bx\|.
\end{gather*}
Since $u\ne\bx$, it follows immediately from the last inequality that $\|x^*\|\ge{|\partial{f}|{}_{\rm bd}}-\eps$.
Thus, $\|x^*\|\ge{|\partial{f}|{}_{\rm bd}}-\eps$ for all $x^*\in\sd g(u)$ with $u\ne\bx$, particularly, for all $x^*\in\sd g(u)$ with $g(u)>0$.
Hence,
$\Er\{\Ptb(f,\varepsilon)\}\ge{|\partial{f}|{}_{\rm bd}}-\eps>0$ which yields \eqref{Ge-2}.

\underline{$0\notin\Int\partial{f}(x)$ for all $x\in{S}_f^=$}.
If $g(u)>0$, then, in view of \eqref{Io}, $f(u)>-\xi\|u-x\|-
%|g(x)-f(x)|
{|p(x)|}
$ and, by \eqref{Io2}, $\|u^*\|\ge{|\partial{f}|{}_{\rm bd}}+\xi-\eps$ for all $u^*\in\partial{f}(u)$.
If $x^*\in\partial{g}(u)$, then,
%swapping $f$ and $g$ in the above proof of \eqref{Ge-1}, we conclude that
{in view of Lemma~\ref{L1},
$x^*\in\sd f(u)+\sd p(u)\subset\sd f(u)+\xi\B^*$},
and consequently, $\|x^*\|\ge{|\partial{f}|{}_{\rm bd}}-\eps$.
Hence,
$\Er\{\Ptb(f,\varepsilon)\}\ge{|\partial{f}|{}_{\rm bd}}-\eps>0$, and consequently, \eqref{Ge-2} holds true.
Together with
\eqref{Ge.07}, this proves the second group of inequalities in \eqref{Ge-0}.

Next we show that
\begin{align}\label{Ine}
{\inf\{\eps>0\mid \Er\{\Ptb^w_l(f,\varepsilon)\}=0\}\le{|\partial{f}|{}_{\rm bd}}.}
\end{align}
If ${|\partial{f}|{}_{\rm bd}}=\infty$, the inequality holds trivially.
Let ${|\partial{f}|{}_{\rm bd}}<\eps<\infty$.
By \eqref{auf}, there is an $x\in S_f^=$ such that ${|\partial{f}|{}_{\rm bd}(x)}<\eps$.
By Theorem~\ref{ge}, $\Er\{\Ptb_l(f,x,\varepsilon)\}(x)=0$.
Finally, by the equality in \eqref{ert22},
$\Er\{\Ptb^w_l(f,\varepsilon)\}=0$, and consequently, \eqref{Ine} holds true.
Together with
\eqref{Ge.07+}, this proves the first group of inequalities in \eqref{Ge-0}.
\qed\end{proof}

\begin{corollary}\label{Re}
Let $f\in\Gamma_0(X)$, $S_f\ne\emptyset$, $\varepsilon\ge0$.
Then
\begin{align*}
\Er\{\Ptb^w(f,\varepsilon)\}>0
\;\;&\Rightarrow\;\;
\Er\{\Ptb^w_l(f,\varepsilon)\}>0
\;\;\Rightarrow\;\;
\varepsilon\le{|\partial{f}|{}_{\rm bd}},
\\
\varepsilon<{|\partial{f}|{}_{\rm bd}}
\;\;\Rightarrow\;\;
\Er\{\Ptb(f,\varepsilon)\}>0
\;\;&\Rightarrow\;\;
\Er\{\Ptb_l(f,\varepsilon)\}>0.
\end{align*}
\end{corollary}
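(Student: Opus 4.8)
The plan is to obtain the Corollary as a bookkeeping consequence of Theorem~\ref{Ge}, supplemented by two elementary facts. The first is that each of the four maps $\eps\mapsto\Er\{\Ptb(f,\varepsilon)\}$, $\eps\mapsto\Er\{\Ptb_l(f,\varepsilon)\}$, $\eps\mapsto\Er\{\Ptb^w(f,\varepsilon)\}$, $\eps\mapsto\Er\{\Ptb^w_l(f,\varepsilon)\}$ is nonincreasing: enlarging $\eps$ enlarges each underlying family of perturbations (for the linear families the admissible ball $\eps\B^*$ grows, while for $\Ptb(f,\varepsilon)$ and $\Ptb^w(f,\varepsilon)$ the threshold condition \eqref{Io2} is merely relaxed), and an error bound modulus defined as an infimum over a larger family can only decrease. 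The second fact is that each of these quantities is nonnegative, being an infimum of ratios of nonnegative numbers. Together these show that for every such quantity the set $\{\eps>0\mid\Er\{\cdot\}=0\}$ is an upper set, so that its value stays strictly positive below the threshold $\inf\{\eps>0\mid\Er\{\cdot\}=0\}$ and equals $0$ strictly above it.

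First I would dispose of the two ``outer'' implications, which use only the pointwise estimates already recorded. The implication $\Er\{\Ptb^w(f,\varepsilon)\}>0\Rightarrow\Er\{\Ptb^w_l(f,\varepsilon)\}>0$ is immediate from $\Er\{\Ptb^w(f,\varepsilon)\}\le\Er\{\Ptb^w_l(f,\varepsilon)\}$ in \eqref{ert22}, and $\Er\{\Ptb(f,\varepsilon)\}>0\Rightarrow\Er\{\Ptb_l(f,\varepsilon)\}>0$ follows the same way from $\Er\{\Ptb(f,\varepsilon)\}\le\Er\{\Ptb_l(f,\varepsilon)\}$ in \eqref{ert2}. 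For the remaining two implications I would invoke Theorem~\ref{Ge}. For the bottom line, the inequality ${|\partial{f}|{}_{\rm bd}}\le\inf\{\eps>0\mid\Er\{\Ptb(f,\varepsilon)\}=0\}$ shows that any $\eps<{|\partial{f}|{}_{\rm bd}}$ lies strictly below the threshold; choosing $\eps'$ with $\eps\le\eps'<{|\partial{f}|{}_{\rm bd}}$ and $\eps'>0$, the threshold argument gives $\Er\{\Ptb(f,\varepsilon')\}>0$, and monotonicity yields $\Er\{\Ptb(f,\varepsilon)\}\ge\Er\{\Ptb(f,\varepsilon')\}>0$. For the top line I argue by contraposition: the inequality $\inf\{\eps>0\mid\Er\{\Ptb^w_l(f,\varepsilon)\}=0\}\le{|\partial{f}|{}_{\rm bd}}$ shows that if $\eps>{|\partial{f}|{}_{\rm bd}}$ then $\eps$ exceeds the threshold, so there is an $\eps'<\eps$ with $\Er\{\Ptb^w_l(f,\varepsilon')\}=0$, whence by monotonicity $\Er\{\Ptb^w_l(f,\varepsilon)\}=0$; this is precisely the contrapositive of $\Er\{\Ptb^w_l(f,\varepsilon)\}>0\Rightarrow\varepsilon\le{|\partial{f}|{}_{\rm bd}}$.

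The only genuine content beyond this bookkeeping is the monotonicity step, and that is where I expect the main (though minor) obstacle to sit: one must check carefully that a witness $(x,\xi)$ certifying $g\in\Ptb(f,\varepsilon)$ via \eqref{Io2} for a given $\eps$ continues to certify membership for every larger $\eps$, which is exactly what the form of \eqref{Io2} provides. Finally I would note the degenerate values of ${|\partial{f}|{}_{\rm bd}}$, which only trivialize the statement: when ${|\partial{f}|{}_{\rm bd}}=0$ the hypothesis $\eps<{|\partial{f}|{}_{\rm bd}}$ is vacuous, and when ${|\partial{f}|{}_{\rm bd}}=\infty$ the conclusion $\eps\le{|\partial{f}|{}_{\rm bd}}$ is automatic; the monotonicity argument handles the boundary case $\eps=0$ uniformly with the rest.
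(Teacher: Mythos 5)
Your argument is correct and is exactly the intended derivation: the paper states Corollary~\ref{Re} without proof as an immediate consequence of Theorem~\ref{Ge}, and your bookkeeping --- the inequalities \eqref{ert2} and \eqref{ert22} for the outer implications, plus monotonicity in $\eps$ of each perturbation family and nonnegativity of the $\Er$ quantities to convert the threshold inequalities of Theorem~\ref{Ge} into the two remaining implications --- supplies precisely the omitted routine details. The only (inconsequential) imprecision is in attributing which defining condition is relaxed for which family, e.g.\ $\Ptb^w(f,\varepsilon)$ is governed by the calmness condition with constant $\eps$ rather than by \eqref{Io2}, and $\Ptb_l(f,\varepsilon)$ uses the ball $\xi\B^*$ together with an \eqref{Io2}-type threshold; in every case the family still grows with $\eps$, so the monotonicity step stands.
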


\begin{remark}
1. Theorem~\ref{Ge} strengthens \cite[Theorem~25]{KruNgaThe10}.
Nonnegative number ${|\partial{f}|{}_{\rm bd}}$
%can be interpreted as
{provides an estimate for}
the \emph{radius of error bounds} for families of convex or linear perturbations of $f$.

{2. The proof of Theorem~\ref{Ge} does not use the completeness of the underlying space.
The assertion seems to be valid in arbitrary normed linear spaces.}

3. On different stages of the above proof of Theorem~\ref{Ge}, different features of the definition \eqref{Io2}, \eqref{Io} of an $\eps$-perturbation were used.
Accordingly, the statement of the theorem can be strengthened by cutting it into pieces and, for each piece, assuming exactly those features of the definition of an $\eps$-perturbation, that are needed there.

In the proof of the inequality \eqref{Ge-2} when $0\in\Int\partial{f}(\bar{x})$
for some $\bar{x}\in{S}_f^=$, the one-sided estimate \eqref{Io-} was used instead of the `full' inequality \eqref{Io} in the definition of an $\eps$-perturbation while the inequality \eqref{Io2} was not needed at all.
Similarly, in the proof of the same inequality \eqref{Ge-2} in the case when $0\notin\Int\partial{f}(x)$ for all $x\in{S}_f^=$, the opposite one-sided estimate \begin{gather}\label{Io+}
%g(u)-{f}(u)-g(\bx)+f(\bx)
{p(u)-p(\bx)}
\le\xi\|u-\bx\| \quad\mbox{for all}\quad {u}\in
%\dom f
{X}
\end{gather}
was used.

In the proof of the inequality \eqref{Ine} we worked with `weak'
%convex
$\eps$-per\-turbations and, again, only the one-sided estimate \eqref{Io+} was used.
\xqed\end{remark}

\section{Concluding remarks and perspectives}
The main results of this article establish the exact formula for the radius of local error bounds (Theorem~\ref{ge}) and estimates of global error bounds (Theorem~\ref{Ge}) for families of convex and linear perturbations of a convex inequality system.

{The next natural step would be to consider possible applications of the obtained estimates in sensitivity analysis of variational problems and convergence analysis of computational algorithms.
Attacking the stability of error bounds for inequality systems defined by non-convex functions is also on the agenda.
The recent article by Zheng \& Wei \cite{ZheWei12} suggests some important classes of such systems to be studied as a starting point.
Investigating nonlinear, particularly H\"older type stability estimates can be of interest.
\begin{acknowledgements}
The authors thank the referees for the careful reading of the manuscript and many constructive comments and suggestions.
{We particularly thank one of the reviewers for attracting our attention to \cite[Theorem~3.2]{Gfr11} and \cite[Theorem~4.4]{LiMor12}.}
\end{acknowledgements}}
% ----------------------------------------------------------------
%\bibliographystyle{siam}
\bibliographystyle{spmpsci}
\bibliography{KR-tmp,kruger,buch-kr}

\def\Dbar{\leavevmode\lower.6ex\hbox to 0pt{\hskip-.23ex \accent"16\hss}D}
  \def\cfac#1{\ifmmode\setbox7\hbox{$\accent"5E#1$}\else
  \setbox7\hbox{\accent"5E#1}\penalty 10000\relax\fi\raise 1\ht7
  \hbox{\lower1.15ex\hbox to 1\wd7{\hss\accent"13\hss}}\penalty 10000
  \hskip-1\wd7\penalty 10000\box7} \def\cprime{$'$} \def\cprime{$'$}
  \def\cftil#1{\ifmmode\setbox7\hbox{$\accent"5E#1$}\else
  \setbox7\hbox{\accent"5E#1}\penalty 10000\relax\fi\raise 1\ht7
  \hbox{\lower1.15ex\hbox to 1\wd7{\hss\accent"7E\hss}}\penalty 10000
  \hskip-1\wd7\penalty 10000\box7} \def\cprime{$'$} \def\cprime{$'$}
  \def\cprime{$'$} \def\cprime{$'$} \def\cprime{$'$}
\begin{thebibliography}{10}
\providecommand{\url}[1]{{#1}}
\providecommand{\urlprefix}{URL }
\expandafter\ifx\csname urlstyle\endcsname\relax
  \providecommand{\doi}[1]{DOI~\discretionary{}{}{}#1}\else
  \providecommand{\doi}{DOI~\discretionary{}{}{}\begingroup
  \urlstyle{rm}\Url}\fi

\bibitem{AusCro88}
Auslender, A., Crouzeix, J.P.: Global regularity theorems.
\newblock Math. Oper. Res. \textbf{13}(2), 243--253 (1988).
\newblock \doi{10.1287/moor.13.2.243}

\bibitem{Aze03}
Az{\'e}, D.: A survey on error bounds for lower semicontinuous functions.
\newblock In: Proceedings of 2003 {MODE}-{SMAI} {C}onference, \emph{ESAIM
  Proc.}, vol.~13, pp. 1--17. EDP Sci., Les Ulis (2003)

\bibitem{Aze06}
Az{\'e}, D.: A unified theory for metric regularity of multifunctions.
\newblock J. Convex Anal. \textbf{13}(2), 225--252 (2006)

\bibitem{AzeCor04}
Az{\'e}, D., Corvellec, J.N.: Characterizations of error bounds for lower
  semicontinuous functions on metric spaces.
\newblock ESAIM Control Optim. Calc. Var. \textbf{10}(3), 409--425 (2004)

\bibitem{BauBor96}
Bauschke, H.H., Borwein, J.M.: On projection algorithms for solving convex
  feasibility problems.
\newblock SIAM Rev. \textbf{38}(3), 367--426 (1996).
\newblock \doi{10.1137/S0036144593251710}

\bibitem{BecTeb03}
Beck, A., Teboulle, M.: Convergence rate analysis and error bounds for
  projection algorithms in convex feasibility problems.
\newblock Optim. Methods Softw. \textbf{18}(4), 377--394 (2003).
\newblock \doi{10.1080/10556780310001604977}.
\newblock The Second Japanese-Sino Optimization Meeting, Part II (Kyoto, 2002)

\bibitem{BedKru12}
Bednarczuk, E.M., Kruger, A.Y.: Error bounds for vector-valued functions:
  necessary and sufficient conditions.
\newblock Nonlinear Anal. \textbf{75}(3), 1124--1140 (2012).
\newblock \doi{10.1016/j.na.2011.05.098}

\bibitem{BedKru12.2}
Bednarczuk, E.M., Kruger, A.Y.: Error bounds for vector-valued functions on
  metric spaces.
\newblock Vietnam J. Math. \textbf{40}(2-3), 165--180 (2012)

\bibitem{Bee93}
Beer, G.: Topologies on Closed and Closed Convex Sets, \emph{Mathematics and
  its Applications}, vol. 268.
\newblock Kluwer Academic Publishers Group, Dordrecht (1993).
\newblock \doi{10.1007/978-94-015-8149-3}

\bibitem{BolNguPeySut}
Bolte, J., Nguyen, T.P., Peypouquet, J., Suter, B.W.: From error bounds to the
  complexity of first order descent methods for convex functions.
\newblock Preprint, ArXiv \textbf{1510.08234} (2015)

\bibitem{BorLiYao14}
Borwein, J.M., Li, G., Yao, L.: Analysis of the convergence rate for the cyclic
  projection algorithm applied to basic semialgebraic convex sets.
\newblock SIAM J. Optim. \textbf{24}(1), 498--527 (2014).
\newblock \doi{10.1137/130919052}

\bibitem{BorVan10}
Borwein, J.M., Vanderwerff, J.D.: Convex Functions: Constructions,
  Characterizations and Counterexamples, \emph{Encyclopedia of Mathematics and
  its Applications}, vol. 109.
\newblock Cambridge University Press, Cambridge (2010).
\newblock \doi{10.1017/CBO9781139087322}

\bibitem{BurDeng05}
Burke, J.V., Deng, S.: Weak sharp minima revisited. {II}. {A}pplication to
  linear regularity and error bounds.
\newblock Math. Program., Ser. B \textbf{104}(2-3), 235--261 (2005)

\bibitem{CanHanParTol15}
C\'anovas, M.J., Hantoute, A., Parra, J., Toledo, F.J.: Boundary of
  subdifferentials and calmness moduli in linear semi-infinite optimization.
\newblock Optimization Letters \textbf{9}(3), 513--521 (2015).
\newblock \doi{10.1007/s11590-014-0767-1}.
\newblock \urlprefix\url{http://dx.doi.org/10.1007/s11590-014-0767-1}

\bibitem{CanHenLopPar16}
C\'anovas, M.J., Henrion, R., L\'opez, M.A., Parra, J.: Outer limit of
  subdifferentials and calmness moduli in linear and nonlinear programming.
\newblock J. Optim. Theory Appl. \textbf{169}(3), 925--952 (2016).
\newblock \doi{10.1007/s10957-015-0793-x}

\bibitem{CanKruLopParThe14}
C\'anovas, M.J., Kruger, A.Y., L\'opez, M.A., Parra, J., Th\'era, M.A.:
  Calmness modulus of linear semi-infinite programs.
\newblock SIAM J. Optim. \textbf{24}(1), 29--48 (2014)

\bibitem{Cen84}
Censor, Y.: Iterative methods for the convex feasibility problem.
\newblock In: Convexity and Graph Theory ({J}erusalem, 1981),
  \emph{North-Holland Math. Stud.}, vol.~87, pp. 83--91. North-Holland,
  Amsterdam (1984).
\newblock \doi{10.1016/S0304-0208(08)72812-3}

\bibitem{Com96}
Combettes, P.L.: The convex feasibility problem in image recovery.
\newblock In: P.~Hawkes (ed.) Advances in Imaging and Electron Physics,
  vol.~95, pp. 155--270. Academic Press, New York (1996)

\bibitem{CorJouZal97}
Cornejo, O., Jourani, A., Z{\u{a}}linescu, C.: Conditioning and
  upper-{L}ipschitz inverse subdifferentials in nonsmooth optimization
  problems.
\newblock J. Optim. Theory Appl. \textbf{95}(1), 127--148 (1997).
\newblock \doi{10.1023/A:1022687412779}

\bibitem{CorMot08}
Corvellec, J.N., Motreanu, V.V.: Nonlinear error bounds for lower
  semicontinuous functions on metric spaces.
\newblock Math. Program., Ser. A \textbf{114}(2), 291--319 (2008)

\bibitem{DonRoc14}
Dontchev, A.L., Rockafellar, R.T.: Implicit Functions and Solution Mappings. A
  View from Variational Analysis, 2 edn.
\newblock Springer Series in Operations Research and Financial Engineering.
  Springer, New York (2014)

\bibitem{DruIofLew15.2}
Drusvyatskiy, D., Ioffe, A.D., Lewis, A.S.: Curves of descent.
\newblock SIAM J. Control Optim. \textbf{53}(1), 114--138 (2015).
\newblock \doi{10.1137/130920216}

\bibitem{Eke74}
Ekeland, I.: On the variational principle.
\newblock J. Math. Anal. Appl. \textbf{47}, 324--353 (1974)

\bibitem{FabHenKruOut10}
Fabian, M.J., Henrion, R., Kruger, A.Y., Outrata, J.V.: Error bounds: necessary
  and sufficient conditions.
\newblock Set-Valued Var. Anal. \textbf{18}(2), 121--149 (2010)

\bibitem{FabHenKruOut12}
Fabian, M.J., Henrion, R., Kruger, A.Y., Outrata, J.V.: About error bounds in
  metric spaces.
\newblock In: D.~Klatte, H.J. L\"uthi, K.~Schmedders (eds.) Operations Research
  Proceedings 2011. Selected papers of the Int. Conf. Operations Research (OR
  2011), August 30 -- September 2, 2011, Zurich, Switzerland, pp. 33--38.
  Springer-Verlag, Berlin (2012)

\bibitem{Gfr11}
Gfrerer, H.: First order and second order characterizations of metric
  subregularity and calmness of constraint set mappings.
\newblock SIAM J. Optim. \textbf{21}(4), 1439--1474 (2011)

\bibitem{GfrOut16}
Gfrerer, H., Outrata, J.V.: On computation of generalized derivatives of the
  normal-cone mapping and their applications.
\newblock Math. Oper. Res. \textbf{41}(4), 1535--1556 (2016).
\newblock \doi{10.1287/moor.2016.0789}.
\newblock \urlprefix\url{http://dx.doi.org/10.1287/moor.2016.0789}

\bibitem{HenJou02}
Henrion, R., Jourani, A.: Subdifferential conditions for calmness of convex
  constraints.
\newblock SIAM J. Optim. \textbf{13}(2), 520--534 (2002)

\bibitem{HenOut01}
Henrion, R., Outrata, J.V.: A subdifferential condition for calmness of
  multifunctions.
\newblock J. Math. Anal. Appl. \textbf{258}(1), 110--130 (2001)

\bibitem{HesLuk13}
Hesse, R., Luke, D.R.: Nonconvex notions of regularity and convergence of
  fundamental algorithms for feasibility problems.
\newblock SIAM J. Optim. \textbf{23}(4), 2397--2419 (2013).
\newblock \doi{10.1137/120902653}

\bibitem{Hof52}
Hoffman, A.J.: On approximate solutions of systems of linear inequalities.
\newblock J. Research Nat. Bur. Standards \textbf{49}, 263--265 (1952)

\bibitem{HuaNg04}
Huang, L.R., Ng, K.F.: On first- and second-order conditions for error bounds.
\newblock SIAM J. Optim. \textbf{14}(4), 1057--1073 (2004).
\newblock \doi{10.1137/S1052623401390549}

\bibitem{Iof00_}
Ioffe, A.D.: Metric regularity and subdifferential calculus.
\newblock Russian Math. Surveys \textbf{55}, 501--558 (2000)

\bibitem{Iof16}
Ioffe, A.D.: Metric regularity -- a survey. {P}art {I}. {T}heory.
\newblock J. Aust. Math. Soc. \textbf{101}(2), 188--243 (2016).
\newblock \doi{10.1017/S1446788715000701}

\bibitem{Iof16.2}
Ioffe, A.D.: Metric regularity -- a survey. {P}art {II}. {A}pplications.
\newblock J. Aust. Math. Soc. \textbf{101}(3), 376--417 (2016).
\newblock \doi{10.1017/S1446788715000695}

\bibitem{IofOut08}
Ioffe, A.D., Outrata, J.V.: On metric and calmness qualification conditions in
  subdifferential calculus.
\newblock Set-Valued Anal. \textbf{16}(2-3), 199--227 (2008)

\bibitem{Jou00}
Jourani, A.: Hoffman's error bound, local controllability, and sensitivity
  analysis.
\newblock SIAM J. Control Optim. \textbf{38}(3), 947--970 (2000)

\bibitem{KlaLi99}
Klatte, D., Li, W.: Asymptotic constraint qualifications and global error
  bounds for convex inequalities.
\newblock Math. Program., Ser. A \textbf{84}(1), 137--160 (1999)

\bibitem{Kru15.2}
Kruger, A.Y.: Error bounds and {H}\"older metric subregularity.
\newblock Set-Valued Var. Anal. \textbf{23}(4), 705--736 (2015).
\newblock \doi{10.1007/s11228-015-0330-y}

\bibitem{Kru15}
Kruger, A.Y.: Error bounds and metric subregularity.
\newblock Optimization \textbf{64}(1), 49--79 (2015).
\newblock \doi{10.1080/02331934.2014.938074}

\bibitem{Kru15.3}
Kruger, A.Y.: Nonlinear metric subregularity.
\newblock J. Optim. Theory Appl. pp. 1--36 (2015).
\newblock \doi{10.1007/s10957-015-0807-8}

\bibitem{KruLukTha}
Kruger, A.Y., Luke, D.R., Thao, N.H.: Set regularities and feasibility
  problems.
\newblock Math. Program., Ser. B pp. 1--33 (2016).
\newblock \doi{10.1007/s10107-016-1039-x}

\bibitem{KruNgaThe10}
Kruger, A.Y., Ngai, H.V., Th{\'e}ra, M.: Stability of error bounds for convex
  constraint systems in {B}anach spaces.
\newblock SIAM J. Optim. \textbf{20}(6), 3280--3296 (2010).
\newblock \doi{10.1137/100782206}

\bibitem{KruTha16}
Kruger, A.Y., Thao, N.H.: Regularity of collections of sets and convergence of
  inexact alternating projections.
\newblock J. Convex Anal. \textbf{23}(3), 823--847 (2016)

\bibitem{LewLukMal09}
Lewis, A.S., Luke, D.R., Malick, J.: Local linear convergence for alternating
  and averaged nonconvex projections.
\newblock Found. Comput. Math. \textbf{9}(4), 485--513 (2009).
\newblock \doi{10.1007/s10208-008-9036-y}

\bibitem{LewPan98}
Lewis, A.S., Pang, J.S.: Error bounds for convex inequality systems.
\newblock In: Generalized Convexity, Generalized Monotonicity: Recent Results
  ({L}uminy, 1996), pp. 75--110. Kluwer Acad. Publ., Dordrecht (1998)

\bibitem{LiMor12}
Li, G., Mordukhovich, B.S.: H\"older metric subregularity with applications to
  proximal point method.
\newblock SIAM J. Optim. \textbf{22}(4), 1655--1684 (2012).
\newblock \doi{10.1137/120864660}

\bibitem{LiMenYan}
Li, M.H., Meng, K.W., Yang, X.Q.: On error bound moduli for locally {L}ipschitz
  and regular functions.
\newblock Preprint, arXiv: \textbf{1608.03360}, 1--26 (2016)

\bibitem{Man85}
Mangasarian, O.L.: A condition number for differentiable convex inequalities.
\newblock Math. Oper. Res. \textbf{10}(2), 175--179 (1985).
\newblock \doi{10.1287/moor.10.2.175}

\bibitem{MenYan12}
Meng, K.W., Yang, X.Q.: Equivalent conditions for local error bounds.
\newblock Set-Valued Var. Anal. \textbf{20}(4), 617--636 (2012)

\bibitem{Mor06.1}
Mordukhovich, B.S.: Variational Analysis and Generalized Differentiation. {I}:
  {B}asic {T}heory, \emph{Grundlehren der Mathematischen Wissenschaften
  [Fundamental Principles of Mathematical Sciences]}, vol. 330.
\newblock Springer, Berlin (2006)

\bibitem{NgZhe01}
Ng, K.F., Zheng, X.Y.: Error bounds for lower semicontinuous functions in
  normed spaces.
\newblock SIAM J. Optim. \textbf{12}(1), 1--17 (2001)

\bibitem{NgaKruThe10}
Ngai, H.V., Kruger, A.Y., Th{\'e}ra, M.: Stability of error bounds for
  semi-infinite convex constraint systems.
\newblock SIAM J. Optim. \textbf{20}(4), 2080--2096 (2010).
\newblock \doi{10.1137/090767819}

\bibitem{NgaThe04}
Ngai, H.V., Th{\'e}ra, M.: Error bounds and implicit multifunction theorem in
  smooth {B}anach spaces and applications to optimization.
\newblock Set-Valued Anal. \textbf{12}(1-2), 195--223 (2004).
\newblock \doi{10.1023/B:SVAN.0000023396.58424.98}

\bibitem{NgaThe05}
Ngai, H.V., Th{\'e}ra, M.: Error bounds for convex differentiable inequality
  systems in {B}anach spaces.
\newblock Math. Program., Ser. B \textbf{104}(2-3), 465--482 (2005)

\bibitem{NgaThe08}
Ngai, H.V., Th{\'e}ra, M.: Error bounds in metric spaces and application to the
  perturbation stability of metric regularity.
\newblock SIAM J. Optim. \textbf{19}(1), 1--20 (2008).
\newblock \doi{10.1137/060675721}

\bibitem{NgaThe09}
Ngai, H.V., Th{\'e}ra, M.: Error bounds for systems of lower semicontinuous
  functions in {A}splund spaces.
\newblock Math. Program., Ser. B \textbf{116}(1-2), 397--427 (2009)

\bibitem{NgaTroThe13}
Ngai, H.V., Tron, N.H., Th{\'e}ra, M.: Implicit multifunction theorems in
  complete metric spaces.
\newblock Math. Program. \textbf{139}(1-2, Ser. B), 301--326 (2013).
\newblock \doi{10.1007/s10107-013-0673-9}

\bibitem{NgaTroThe14}
Ngai, H.V., Tron, N.H., Th{\'e}ra, M.: Metric regularity of the sum of
  multifunctions and applications.
\newblock J. Optim. Theory Appl. \textbf{160}(2), 355--390 (2014).
\newblock \doi{10.1007/s10957-013-0385-6}

\bibitem{NolRon16}
Noll, D., Rondepierre, A.: On local convergence of the method of alternating
  projections.
\newblock Found. Comput. Math. \textbf{16}(2), 425--455 (2016).
\newblock \doi{10.1007/s10208-015-9253-0}

\bibitem{Pang97}
Pang, J.S.: Error bounds in mathematical programming.
\newblock Math. Programming, Ser. B \textbf{79}(1-3), 299--332 (1997)

\bibitem{Pen10}
Penot, J.P.: Error bounds, calmness and their applications in nonsmooth
  analysis.
\newblock In: Nonlinear analysis and optimization {II}. {O}ptimization,
  \emph{Contemp. Math.}, vol. 514, pp. 225--247. Amer. Math. Soc., Providence,
  RI (2010).
\newblock \doi{10.1090/conm/514/10110}

\bibitem{Rob75}
Robinson, S.M.: An application of error bounds for convex programming in a
  linear space.
\newblock SIAM J. Control \textbf{13}, 271--273 (1975)

\bibitem{RocWet98}
Rockafellar, R.T., Wets, R.J.B.: Variational Analysis.
\newblock Springer, Berlin (1998)

\bibitem{Ros51}
Rosenbloom, P.C.: Quelques classes de probl\'mes extr\'emaux.
\newblock Bulletin de la Soci\'et\'e Math\'ematique de France \textbf{79},
  1--58 (1951).
\newblock \urlprefix\url{http://eudml.org/doc/86848}

\bibitem{WuYe01}
Wu, Z., Ye, J.J.: Sufficient conditions for error bounds.
\newblock SIAM J. Optim. \textbf{12}(2), 421--435 (2001/02)

\bibitem{Zal03}
Z{\u{a}}linescu, C.: Sharp estimates for {H}offman's constant for systems of
  linear inequalities and equalities.
\newblock SIAM J. Optim. \textbf{14}(2), 517--533 (2003).
\newblock \doi{10.1137/S1052623402403505}

\bibitem{ZheNg10}
Zheng, X.Y., Ng, K.F.: Metric subregularity and calmness for nonconvex
  generalized equations in {B}anach spaces.
\newblock SIAM J. Optim. \textbf{20}(5), 2119--2136 (2010).
\newblock \doi{10.1137/090772174}

\bibitem{ZheNg12}
Zheng, X.Y., Ng, K.F.: Metric subregularity for proximal generalized equations
  in {H}ilbert spaces.
\newblock Nonlinear Anal. \textbf{75}(3), 1686--1699 (2012).
\newblock \doi{10.1016/j.na.2011.07.004}

\bibitem{ZheWei12}
Zheng, X.Y., Wei, Z.: Perturbation analysis of error bounds for quasi-subsmooth
  inequalities and semi-infinite constraint systems.
\newblock SIAM J. Optim. \textbf{22}(1), 41--65 (2012).
\newblock \doi{10.1137/100806199}

\end{thebibliography}
\end{document}